\newtheorem{theorem}{Theorem}[section]
\newtheorem{lemma}[theorem]{Lemma}
\newtheorem{proposition}[theorem]{Proposition}
\newtheorem{corollary}[theorem]{Corollary}
\theoremstyle{definition}
\theoremstyle{remark}
\newtheorem{remark}[theorem]{Remark}
\numberwithin{equation}{section}
\begin{document}
\setcounter{page}{1}

\title[Geometric aspects of $p$-angular and skew $p$-angular distances]{Geometric aspects of $p$-angular\\ and skew $p$-angular distances}

\author[J. Rooin, S. Habibzadeh, M.S. Moslehian]{J. Rooin$^{1}$, S. Habibzadeh$^{2}$ and M.S. Moslehian$^{3}$}

\address{$^{1}$Department of Mathematics, Institute for Advanced Studies in Basic Sciences (IASBS), Zanjan 45137-66731, Iran}
\email{rooin@iasbs.ac.ir}

\address{$^{2}$Department of Mathematics, Institute for Advanced Studies in Basic Sciences (IASBS), Zanjan 45137-66731, Iran;\newline The Tusi Mathematical Research Group (TMRG), Mashhad, Iran.}
\email{s.habibzadeh@iasbs.ac.ir}

\address{$^{3}$ Department of Pure Mathematics, Center of Excellence in
Analysis on Algebraic Structures (CEAAS), Ferdowsi University of
Mashhad, P. O. Box 1159, Mashhad 91775, Iran.}
\email{moslehian@um.ac.ir, moslehian@member.ams.org}

\subjclass[2010]{46B20, 46C15, 26D15}

\keywords{$p$-angular distance, skew $p$-angular distance, inequality, characterization of inner product spaces.}

\begin{abstract}
Corresponding to the concept of $p$-angular distance $\alpha_p[x,y]:=\left\lVert\lVert x\rVert^{p-1}x-\lVert y\rVert^{p-1}y\right\rVert$, we first introduce the notion of skew $p$-angular distance $\beta_p[x,y]:=\left\lVert \lVert y\rVert^{p-1}x-\lVert x\rVert^{p-1}y\right\rVert$ for non-zero elements of $x, y$ in a real normed linear space and study some of significant geometric properties of the $p$-angular and the skew $p$-angular distances. We then give some results comparing two different $p$-angular distances with each other. Finally, we present some characterizations of inner product spaces related to the $p$-angular and the skew $p$-angular distances. In particular, we show that if $p>1$ is a real number, then a real normed space $\mathcal{X}$ is an inner product space, if and only if for any $x,y\in \mathcal{X}\smallsetminus{\lbrace 0\rbrace}$, it holds that $\alpha_p[x,y]\geq\beta_p[x,y]$.
\end{abstract}

\maketitle

\section{Introduction}

Throughout this paper, let $\mathcal{X}$ denotes an arbitrary non-zero normed linear space over the field of real numbers.\par
Clarkson \cite{C} introduced  the concept of angular distance between non-zero elements $x$ and $y$ in $\mathcal{X}$ by
\begin{equation*}
\alpha[x,y]=\left\Vert\frac{x}{\Vert x\Vert}-\frac{y}{\Vert y\Vert}\right\Vert.
\end{equation*}
In \cite{M}, Maligranda considered the $p$-angular distance
\begin{equation*}
\alpha_{p}[x,y]=\left\Vert\frac{x}{\Vert x\Vert^{1-p}}-\frac{y}{\Vert y\Vert^{1-p}}\right\Vert\qquad(p\in\mathbb{R})
\end{equation*}
between non-zero vectors $x$ and $y$ in $\mathcal{X}$ as a generalization of the concept of angular distance.
Corresponding to the notion of $p$-angular distance, we define the concept of skew $p$-angular distance between non-zero vectors $x$ and $y$ in $\mathcal{X}$, as
\begin{equation*}
\beta_{p}[x,y]=\bigg\Vert\frac{x}{\Vert y\Vert^{1-p}}-\frac{y}{\Vert x\Vert^{1-p}}\bigg\Vert\qquad(p\in\mathbb{R}).
\end{equation*}
We set $\beta[x,y]$ for $\beta_{p}[x,y]$ when $p=0$ and call it skew angular distance between non-zero elements $x$ and $y$ in $\mathcal{X}$. Evidently, it holds that
\begin{equation}\label{pp}
\beta_p[x,y]=\lVert x\rVert^{p-1}\lVert y\rVert^{p-1}\alpha_{2-p}[x,y].
\end{equation}\par
Dunkl and Williams \cite{D9} obtained a useful upper bound for the angular distance. They showed that
\begin{equation*}
\alpha[x,y]\leq\frac{4\lVert x-y\rVert}{\lVert x\rVert+\lVert y\rVert}.
\end{equation*}
The following result providing a lower bound for the $p$-angular distance was
stated without a proof by Gurari\u{l} in \cite{VI}:
\begin{equation*}
2^{-p}\lVert x-y\rVert^{p}\leq\alpha_p[x,y],
\end{equation*}
where $p\geq1$ and $x,y\in \mathcal{X}$.\par
Finally, we recall the result of Hile \cite{khu}:
\begin{equation}\label{nbn}
\alpha_p[x,y]\leq\frac{\lVert y\rVert^{p}-\lVert x\rVert^{p}}{\lVert y\rVert-\lVert x\rVert}\lVert x-y\rVert,
\end{equation}
for $p\geq1$ and $x,y\in \mathcal{X}$ with $\lVert x\rVert\neq\lVert y\rVert$.
For some recently obtained upper and lower bounds for the $p$-angular distance the reader is referred to \cite{D2, D1} and \cite{M}.\par
Numerous basic characterizations of inner product spaces under various conditions were first given by Fr\'{e}chet, Jordan and von Neumann; see \cite{DFM} and references therein. Since then,
the problem of finding necessary and sufficient conditions for a normed space to be an inner product space has been
investigated by many mathematicians by considering some types of orthogonality or some geometric aspects of underlying
spaces; see, e.g., \cite{F, lorch}. There is an interesting book by Amir \cite{A} that contains several characterizations of inner product spaces, which are based on norm inequalities, various notions of orthogonality in normed linear spaces and so on.
Among significant characterizations of inner product spaces related to $p$-angular distance, we can mention \cite{Al, DFM, DSS, H}. The next two theorems due to Lorch and Ficken will be used in this paper.

\noindent
$\mathbf{\textbf{Theorem A}} (\mathcal{\textbf{Lorch}})$ \cite{lorch}. \textit{Let $(\mathcal{X},\Vert\cdot\Vert)$ be a normed space. Then the following statements are mutually equivalent:
\begin{itemize}
\item [\rm{(i)}]~ For each $x,y\in \mathcal{X}$ if $\Vert x\Vert=\Vert y\Vert $, then $\Vert x+y\Vert\leq \Vert\lambda x+\lambda^{-1}y\Vert$ (for all $\lambda\neq 0$).
\item [\rm{(ii)}]~ For each $x,y\in \mathcal{X}$ if $\Vert x+y\Vert\leq\Vert\lambda x+\lambda^{-1}y\Vert$ (for all $\lambda\neq 0$), then $\Vert x\Vert=\Vert y\Vert$.
\item [\rm{(iii)}]~ $(\mathcal{X},\Vert\cdot\Vert)$ is an inner product space.
\end{itemize}}
\noindent

$\mathbf{\textbf{Theorem B }} (\mathcal{\textbf{Ficken}})$ \cite{F}. \textit{Let $(\mathcal{X},\Vert\cdot\Vert)$ be a normed space. Then the following statements are mutually equivalent:
\begin{itemize}
\item [\rm{(i)}]~ For each $x, y\in \mathcal{X}$ if $\lVert x\rVert=\lVert y\rVert$, then $\lVert \alpha x+\beta y\rVert=\lVert \beta x+\alpha y\rVert$ (for all $\alpha,\beta>0$).
\item [\rm{(ii)}]~ For each $x, y\in \mathcal{X}$ if $\lVert x\rVert=\lVert y\rVert$, then $\lVert\lambda x+\lambda^{-1}y\rVert=\lVert\lambda^{-1}x+\lambda y\rVert$ for all $\lambda>0$.
\item [\rm{(iii)}]~ $(\mathcal{X},\Vert\cdot\Vert)$ is an inner product space.
\end{itemize}}
In this paper, first we study some topological aspects of $p$-angular distances such as metrizability, consistency and completeness. Then, we compare two arbitrary $p$-angular and $q$-angular distances with each other, which generalize the results of Maligranda \cite{M} and Dragomir \cite{D2}. Finally, we present two different characterizations of inner product spaces related to the $p$-angular and the skew $p$-angular distances.

\section{Some initial observations}
In this section, first we examine some topological facts of the $p$-angular and the skew $p$-angular distances. Then we compare the $p$-angular distance with the skew $p$-angular distance in inner product spaces and give suitable representations for the $p$-angular distance, which will be used in the sequel for characterizations of inner product spaces.

\subsection{Geometric properties of the $p$-angular distance}

In this subsection, we study the metrizability, the consistency and the completeness concepts regarding to the $p$-angular and the skew $p$-angular distances.
\begin{theorem}\label{hgi}
For $p\neq0$, $\alpha_p[x,y]$ is a metric on $\mathcal{X}\smallsetminus{\lbrace 0\rbrace}$, which is consistent with $\alpha_1[x,y]=\lVert x-y\rVert$; they induce the same topology on $\mathcal{X}\smallsetminus{\lbrace 0\rbrace}$. If $p$ and $q$ are distinct non-zero real numbers, then $\alpha_p$ is not equivalent with $\alpha_q$. If $p\neq 1$, then $\alpha_p$ is not translation invariant.
\end{theorem}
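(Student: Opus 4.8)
The plan is to realize every $\alpha_p$ as the pullback of the norm metric under a single map and to read off all four assertions from properties of that map. For $p\neq 0$ define $\phi_p\colon \mathcal{X}\smallsetminus\{0\}\to\mathcal{X}\smallsetminus\{0\}$ by $\phi_p(x)=\lVert x\rVert^{p-1}x$, so that $\alpha_p[x,y]=\lVert\phi_p(x)-\phi_p(y)\rVert$ and $\lVert\phi_p(x)\rVert=\lVert x\rVert^{p}$. A direct computation shows that $\phi_{1/p}$ is a two-sided inverse of $\phi_p$, and both maps are continuous (each being the product of the continuous scalar function $x\mapsto\lVert x\rVert^{p-1}$ with the identity), so $\phi_p$ is a homeomorphism of $\mathcal{X}\smallsetminus\{0\}$ onto itself. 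Since $\phi_p$ is a bijection, the metric axioms for $\alpha_p$ transfer from those of the norm: symmetry and the triangle inequality are immediate from the identity $\alpha_p[x,y]=\lVert\phi_p(x)-\phi_p(y)\rVert$, while $\alpha_p[x,y]=0$ forces $\phi_p(x)=\phi_p(y)$ and hence $x=y$ by injectivity.

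For consistency I would note that the $\alpha_p$-ball $\{y:\alpha_p[x,y]<r\}$ is exactly $\phi_p^{-1}$ of the norm-ball of radius $r$ about $\phi_p(x)$. Because $\phi_p$ is a homeomorphism for the norm topology, taking preimages of norm-open sets produces norm-open sets and, conversely, every norm-open set arises as such a preimage; hence the $\alpha_p$-topology coincides with the norm topology, which is the $\alpha_1$-topology. This yields at once both the consistency with $\alpha_1[x,y]=\lVert x-y\rVert$ and the claim that all the $\alpha_p$ induce one and the same topology.

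To separate the metrics quantitatively, I would restrict to a ray: fixing a unit vector $e$ and $s,t>0$, one computes $\alpha_p[se,te]=\lvert s^{p}-t^{p}\rvert$. Testing equivalence of $\alpha_p$ and $\alpha_q$ thus amounts to bounding the ratio $\lvert s^{p}-t^{p}\rvert/\lvert s^{q}-t^{q}\rvert$ above and below by positive constants. Taking $t=1$ and letting $s\to\infty$ (when $p$ or $q$ is positive) or $s\to 0^{+}$ (when both are negative) drives this ratio to $0$ or $\infty$ because of the differing growth exponents; consequently no constant $C$ with $\alpha_p\le C\,\alpha_q$ (or with the inequality reversed) can hold, so $\alpha_p$ and $\alpha_q$ are inequivalent whenever $p\neq q$.

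For the last assertion I would again work on the ray and exploit an infinitesimal version of the previous formula. If $\alpha_p$ were translation invariant then, translating the pair $(e,(1+h)e)$ by $e$, we would need $\alpha_p[e,(1+h)e]=\alpha_p[2e,(2+h)e]$ for all small $h>0$. Dividing by $h$ and letting $h\to 0^{+}$, the left side tends to $\lvert p\rvert$ and the right side to $\lvert p\rvert\,2^{p-1}$ (these being the one-sided derivatives of $s\mapsto s^{p}$ at $1$ and at $2$), so translation invariance would force $2^{p-1}=1$, i.e.\ $p=1$. Hence $\alpha_p$ fails to be translation invariant for $p\neq 1$ (recall $p\neq 0$ throughout). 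I expect the genuine work to lie in the inequivalence step: the choice of limiting sequence, and hence which of $\alpha_p,\alpha_q$ dominates, depends on the signs of $p$ and $q$, so this part needs a short but honest case analysis, whereas the metric, topology, and translation statements all follow cleanly once the map $\phi_p$ and the ray formula are in place.
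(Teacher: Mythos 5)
Your proposal is correct, and its first half takes a genuinely different route from the paper's. Where the paper proves consistency of $\alpha_p$ with $\alpha_1$ by two explicit sequential estimates (if $\lVert x_n-x\rVert\to0$, a triangle-inequality splitting gives $\alpha_p[x_n,x]\to0$; conversely $\alpha_p[x_n,x]\to0$ first forces $\lVert x_n\rVert^{p}\to\lVert x\rVert^{p}$ and then $\lVert x_n-x\rVert\to0$), you observe that $\phi_p(x)=\lVert x\rVert^{p-1}x$ is an isometry from $(\mathcal{X}\smallsetminus\{0\},\alpha_p)$ onto $(\mathcal{X}\smallsetminus\{0\},\lVert\cdot\rVert)$ with explicit inverse $\phi_{1/p}$, both norm-continuous, which yields the metric axioms and the topological consistency in one stroke. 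This buys something the paper forgoes: the paper dismisses the metric axioms with ``clearly,'' whereas your injectivity argument via $\phi_{1/p}$ actually supplies the definiteness of $\alpha_p$, and the homeomorphism viewpoint replaces both hands-on limit computations. The second half of your argument is the same in spirit as the paper's --- both test on a ray, where $\alpha_p[se,te]=\lvert s^{p}-t^{p}\rvert$ --- but with different witnesses: the paper scales $\lambda=1/n$, $\mu=t/n$ and sends auxiliary parameters to limits in three sign cases, while you fix $t=1$ and drive the ratio $\lvert s^{p}-1\rvert/\lvert s^{q}-1\rvert$ to $0$ or $\infty$; I checked that your prescription ($s\to\infty$ when some exponent is positive, $s\to0^{+}$ when both are negative) does handle all sign configurations, and it is arguably cleaner. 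For translation invariance the paper compares $\lvert 2^{p}-(\gamma+1)^{p}\rvert$ with $\lvert 1-\gamma^{p}\rvert$ as $\gamma\to\infty$ or $\gamma\to0$, while your infinitesimal version extracts the one-sided derivatives $\lvert p\rvert$ and $\lvert p\rvert 2^{p-1}$ and concludes $2^{p-1}=1$, hence $p=1$; both are sound. One minor discrepancy: the paper's proof also disposes of $p=0$ in the translation-invariance part (taking $\lambda=1$, $\mu=-2$, $\gamma=-1$), whereas you exclude $p=0$ throughout; since the theorem's standing hypothesis is $p\neq0$, your coverage matches the statement as framed and nothing essential is lost.
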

\begin{proof}
Clearly $\alpha_p$ is a metric. Let $\alpha_1[x_n, x]=\lVert x_n-x\rVert\rightarrow 0$ as $n\rightarrow\infty$ in $\mathcal{X}\smallsetminus{\lbrace 0\rbrace}$. Thus $\lim_{n\rightarrow\infty}\lVert x_n\rVert=\lVert x\rVert$, and so
\small\begin{align*}
\alpha_p[x_n, x]&=\left\lVert\lVert x_n\rVert^{p-1}x_n-\lVert x\rVert^{p-1}x\right\rVert\\
&\leq\left\lVert\lVert x_n\rVert^{p-1}x_n-\lVert x_n\rVert^{p-1}x\right\rVert+\left\lVert\lVert x_n\rVert^{p-1}x-\lVert x\rVert^{p-1}x\right\rVert\\
&\leq\lVert x_n\rVert^{p-1}\lVert x_n-x\rVert+\lVert x\rVert\left\lvert\lVert x_n\rVert^{p-1}-\lVert x\rVert^{p-1}\right\rvert\rightarrow 0\quad(\text{as $n\rightarrow\infty$}).
\end{align*}
Therefore the topology of $\alpha_p$ is weaker than the topology of $\alpha_1$ on $\mathcal{X}\smallsetminus{\lbrace 0\rbrace}$.\par
Now we assume that $\alpha_p[x_n, x]\rightarrow 0$ as $n\rightarrow\infty$ in $\mathcal{X}\smallsetminus{\lbrace 0\rbrace}$. We have
\small\begin{equation*}
\bigg\lvert\frac{\lVert x_n\rVert}{\lVert x_n\rVert^{1-p}}-\frac{\lVert x\rVert}{\lVert x\rVert^{1-p}}\bigg\rvert\leq\bigg\lVert\frac{x_n}{\lVert x_n\rVert^{1-p}}-\frac{x}{\lVert x\rVert^{1-p}}\bigg\rVert\rightarrow 0\quad(\text{as $n\rightarrow\infty$}),
\end{equation*}
and so $\lim_{n\rightarrow\infty}\lVert x_n\rVert^{p}=\lVert x\rVert^{p}$, which implies that $\lim_{n\rightarrow\infty}\lVert x_n\rVert=\lVert x\rVert$. Thus,
\small\begin{align*}
\alpha_1[x_n, x]&=\lVert x_n-x\rVert=\lVert x\rVert^{1-p}\left\lVert\lVert x\rVert^{p-1}x_n-\lVert x\rVert^{p-1}x\right\rVert\\
&\leq\lVert x\rVert^{1-p}(\left\lVert\lVert x\rVert^{p-1}x_n-\lVert x_n\rVert^{p-1}x_n\right\rVert+\left\lVert\lVert x_n\rVert^{p-1}x_n-\lVert x\rVert^{p-1}x\right\rVert)\\
&=\lVert x\rVert^{1-p}(\lVert x_n\rVert\left\lvert\lVert x\rVert^{p-1}-\lVert x_n\rVert^{p-1}\right\rvert+\alpha_p[x_n, x])\rightarrow 0\quad(\text{as $n\rightarrow\infty$}).
\end{align*}
Therefore the topology of $\alpha_1$ is weaker than the topology of $\alpha_p$ on $\mathcal{X}\smallsetminus{\lbrace 0\rbrace}$. Hence these two metrics are consistent on $\mathcal{X}\smallsetminus{\lbrace 0\rbrace}$.
Next, let $p,q\in\mathbb{R}\smallsetminus{\lbrace 0\rbrace}$ such that $p<q$. By contrary, assume that there exists a number $M>0$ such that for every $x, y \in \mathcal{X}\smallsetminus{\lbrace 0\rbrace}$, $\alpha_p[x,y]\leq M \alpha_q[x,y]$. Fix a unit vector $a\in \mathcal{X}\smallsetminus{\lbrace 0\rbrace}$. For each  $\lambda, \mu>0$, we have $\alpha_p[\lambda a,\mu a]\leq M \alpha_q[\lambda a,\mu a]$, or
$\lvert \lambda^{p}-\mu^{p}\rvert\leq M\lvert \lambda^{q}-\mu^{q}\rvert$. In particular, if we put $\lambda=\frac{1}{n}$ and $\mu=\frac{t}{n}$ where $n=1,2,\ldots$ and $t>0$, then we have $n^{q-p}\lvert 1-t^{p}\rvert\leq M\lvert 1-t^{q}\rvert$, or $M\geq n^{q-p}\big\lvert\frac{1-t^{p}}{1-t^{q}}\big\rvert~(t\neq1)$. Now letting $t\rightarrow\infty$ in the case when $p<q<0$, and $t\rightarrow 0$ in the case when $0<p<q$, we get $M\geq n^{q-p}~(n=1,2,\ldots)$, and so $M=\infty$, which is a contradiction. In the case where $p<0<q$ taking $\mu=1$, we obtain $\lvert \lambda^{p}-1\rvert\leq M\lvert \lambda^{q}-1\rvert$. Now letting $\lambda\rightarrow 0^{+}$, we get $M=\infty$, a contradiction. Therefore $\alpha_p$ is not equivalent to $\alpha_q$.\par
Now, we show that if $p\neq 1$, then $\alpha_p$ is not translation invariant. By contrary, assume that for each $x,y,z\in \mathcal{X}$ we have $\alpha_p[x+z,y+z]=\alpha_p[x,y]$, whenever $ x, y, x+z, y+z\neq0$. Fixing a unit vector $a\in \mathcal{X}\smallsetminus{\lbrace 0\rbrace}$, put $x=\lambda a$, $y=\gamma a$ and $z=\mu a$, where $\lambda, \mu, \gamma\in\mathbb{R}$. In particular, if we put $\lambda=\mu=1$ and $\gamma>0$, then we have $\lvert 2^{p}-(\gamma+1)^{p}\rvert=\lvert 1-\gamma^{p}\rvert$. Now letting $\gamma\rightarrow\infty$ in the case where $p<0$, and $\gamma\rightarrow0$ in the case where $p>0$, we get a contradiction. In the case where $p=0$, we also get a contradiction by taking $\lambda=1, \mu=-2$ and $\gamma=-1$.
This completes the proof.
\end{proof}
\begin{remark}
It may happen that two metrics $d_1$ and $d_2$ on a set $\mathcal{E}$ are consistent and there exists $m>0$ such that $md_2\leq d_1$ but there exists no $M>0$ such that $d_1\leq M d_2$. For a classical example, take $\mathcal{E}=[1,\infty), d_1(x,y)=\lvert x-y\rvert$ and $d_2(x,y)=\big\lvert\frac{1}{x}-\frac{1}{y}\big\rvert$. Two metrics $d_1$ and $d_2$ induce the same topology on $\mathcal{E}$ and $d_2(x,y)\leq d_1(x,y)$, but since $d_2$ is bounded, there exists no $M>0$ such that $d_1\leq Md_2$.
Since in Theorem \ref{hgi}, $p$ and $q$ are arbitrary, this case cannot occur.
\end{remark}
In spite of $\alpha_p$, the following remark shows that when $p\neq1$, never $\beta_p$ is a metric on $\mathcal{X}\smallsetminus{\lbrace 0\rbrace}$.
\begin{remark}
Let $\mathcal{X}$ be a normed linear space. Take $a\in \mathcal{X}$ with $\lVert a\rVert=1$, and put $x=ra, y=sa$, $z=ta$, where $r,s,t\in\mathbb{R}$. Let $p>1$ and take $r=1$, $s=-1$ and $t>0$. We obtain
\begin{equation*}
\beta_p[x,y]=2>\lvert t^{p-1}-t\rvert+\lvert t^{p-1}+t\rvert
=\beta_p[x,z]+\beta_p[y,z],
\end{equation*}
for small enough t.
This shows that $\beta_p$ is not a metric on $\mathcal{X}\smallsetminus{\lbrace 0\rbrace}$ in this case.
Now let $p<1$, and take $r=2$, $t=1$ and $s>0$. Since for small enough $s$,
\begin{align*}
(2s)^{1-p}\beta_p[x,y] & =\lvert 2^{2-p}-s^{2-p}\rvert
>\lvert s^{1-p}(2^{2-p}-1)\rvert+\lvert2^{1-p}(s^{2-p}-1)\rvert\\
& =(2s)^{1-p}\beta_p[x,z]+(2s)^{1-p}\beta_p[y,z],
\end{align*}
$\beta_p$ is not a metric on $\mathcal{X}\smallsetminus{\lbrace 0\rbrace}$.
\end{remark}
Now we are going to compare completeness of an arbitrary nonempty subset of $\mathcal{X}\smallsetminus{\lbrace 0\rbrace}$ with respect to $\alpha_p$ and $\alpha_q$. To do this, we need some lemmas.
\begin{lemma}
Let $p\neq0$, $A$ be a nonempty $\alpha_p$-complete subset of $\mathcal{X}\smallsetminus{\lbrace 0\rbrace}$ and $\{x_n\}$ be a Cauchy sequence in $\mathcal{X}\smallsetminus{\lbrace 0\rbrace}$. Then
\begin{itemize}
\item [\rm{(i)}]~ If $p>0$, then $A$ is norm-bounded from below, and if $p<0$, then $A$ is norm-bounded from above.
\item [\rm{(ii)}]~ If $p>0$, then $\{x_n\}$ is norm-bounded from above, and if $p<0$, then $\{x_n\}$ is norm-bounded from below.
\end{itemize}
\end{lemma}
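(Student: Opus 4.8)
The plan is to reduce everything to the ordinary norm metric via the map $\phi_p\colon \mathcal{X}\smallsetminus\{0\}\to\mathcal{X}\smallsetminus\{0\}$ defined by $\phi_p(x)=\lVert x\rVert^{p-1}x$. The two facts that drive the whole argument are that $\alpha_p[x,y]=\lVert\phi_p(x)-\phi_p(y)\rVert$ and that $\lVert\phi_p(x)\rVert=\lVert x\rVert^{p}$. In particular, a sequence is $\alpha_p$-Cauchy exactly when its $\phi_p$-image is norm-Cauchy, and $\alpha_p$-convergence corresponds to norm-convergence of the images. Here the word \emph{Cauchy} in the hypothesis is understood in the $\alpha_p$-sense, which is the only reading under which part (ii) can hold.

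For part (i) I would argue by contradiction. Suppose $p>0$ and $A$ is not norm-bounded from below; then there is a sequence $\{a_n\}$ in $A$ with $\lVert a_n\rVert\to 0$. Since $\lVert\phi_p(a_n)\rVert=\lVert a_n\rVert^{p}\to 0$, the triangle inequality gives $\lVert\phi_p(a_n)-\phi_p(a_m)\rVert\leq\lVert\phi_p(a_n)\rVert+\lVert\phi_p(a_m)\rVert\to 0$, so $\{a_n\}$ is $\alpha_p$-Cauchy. By $\alpha_p$-completeness of $A$ it $\alpha_p$-converges to some $a\in A$, i.e. $\phi_p(a_n)\to\phi_p(a)$ in norm; but $\phi_p(a_n)\to 0$, so $\phi_p(a)=0$, which forces $\lVert a\rVert^{p}=0$ and hence $a=0$, contradicting $A\subseteq\mathcal{X}\smallsetminus\{0\}$. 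The case $p<0$ is entirely symmetric: if $A$ is not bounded from above, choose $\{a_n\}\subseteq A$ with $\lVert a_n\rVert\to\infty$; then $\lVert\phi_p(a_n)\rVert=\lVert a_n\rVert^{p}\to 0$ once more (since $p<0$), and the same reasoning yields a nonzero element of $A$ with vanishing $\phi_p$-image, a contradiction.

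For part (ii) the key simplification is that an $\alpha_p$-Cauchy sequence $\{x_n\}$ has a norm-Cauchy, hence norm-bounded, image: there is $M>0$ with $\lVert\phi_p(x_n)\rVert=\lVert x_n\rVert^{p}\leq M$ for every $n$. Raising to the power $1/p$ and tracking the direction of the inequality according to the sign of $p$ then finishes the proof: when $p>0$ we obtain $\lVert x_n\rVert\leq M^{1/p}$, so $\{x_n\}$ is bounded above, while when $p<0$ the exponent $1/p$ is negative and the inequality reverses to $\lVert x_n\rVert\geq M^{1/p}>0$, so $\{x_n\}$ is bounded below.

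The argument is essentially routine once the translation through $\phi_p$ is in place; the only step requiring a little care is the contradiction in part (i), where one must verify that the manufactured sequence is genuinely $\alpha_p$-Cauchy and that its $\alpha_p$-limit, living in $A$, cannot be zero. This is precisely where both hypotheses — that $A$ is $\alpha_p$-complete and that it is contained in $\mathcal{X}\smallsetminus\{0\}$ — are used together.
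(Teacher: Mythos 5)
Your proof is correct and follows essentially the same route as the paper: the contradiction argument in (i) (a sequence with $\lVert a_n\rVert^{p}\to 0$ is $\alpha_p$-Cauchy by the triangle inequality, its $\alpha_p$-limit in $A$ must satisfy $\lVert a\rVert^{p}=0$, contradicting $a\neq 0$) is exactly the paper's, merely repackaged through the isometry $\phi_p(x)=\lVert x\rVert^{p-1}x$ onto the norm metric. For (ii) the paper just writes ``Obvious,'' and your boundedness-of-$\lVert x_n\rVert^{p}$ argument with the sign of $1/p$ tracked is the intended filling-in.
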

\begin{proof}
$(i)$ Let $p>0$. By contrary, assume that there exists a sequence $\{x_n\}$ in $A$ such that $\lim_{n\rightarrow\infty}\lVert x_n\rVert=0$. Therefore
\small\begin{equation*}
\alpha_p[x_m,x_n]=\Big\lVert\frac{x_m}{\lVert x_m\rVert^{1-p}}-\frac{x_n}{\lVert x_n\rVert^{1-p}}\Big\rVert\leq\lVert x_m\rVert^{p}+\lVert x_n\rVert^{p}\rightarrow 0\qquad(m,n\rightarrow\infty),
\end{equation*}
and so $\{x_n\}$ is a $\alpha_p$-Cauchy sequence. Since $A$ is $\alpha_p$-complete, there exists $x\in A$ such that $\lim_{n\rightarrow\infty}\alpha_p[x_n,x]=0$. Since $\left\lvert\lVert x_n\rVert^{p}-\lVert x\rVert^{p}\right\rvert\leq \alpha_p[x_n,x]$,
we get $\lVert x\rVert^{p}=\lim_{n\rightarrow\infty}\left\lvert\lVert x_n\rVert^{p}-\lVert x\rVert^{p}\right\rvert\leq0$, and so $x=0$, which is a contradiction. Therefore, $A$ is norm-bounded from below.
\par
Now, let $p<0$. If $A$ is  not norm-bounded from above, then there exists a sequence $\{x_n\}$ in $A$ such that $\lim_{n\rightarrow\infty}\lVert x_n\rVert=\infty$. By a similar argument we conclude that $\{x_n\}$ is a $\alpha_p$-Cauchy sequence in $A$ and so there exists $x\in A$ such that $\lim_{n\rightarrow\infty}\alpha_p[x_n,x]=0$. Therefore $\lVert x\rVert^{p}=0$, which is impossible.\par
$(ii)$ Obvious.
\end{proof}
The following lemma comparing $\alpha_p$ with $\alpha_q$ without any restrictions on $p$ and $q$, plays an essential role in our study.
\begin{lemma}\label{ppoo}
Let $p,q\in\mathbb{R}$ and $q\neq0$. Then for any non-zero elements $x,y\in \mathcal{X}$,
\begin{align}\label{1}
\frac{\lvert p\rvert}{\lvert p\rvert+\lvert p-q\rvert}&\min(\lVert x\rVert^{p-q},\lVert y\rVert^{p-q})\alpha_{q}[x,y]\nonumber\\
&\leq\alpha_p[x,y]\\
&\leq\frac{\lvert q\rvert+\lvert p-q\rvert}{\lvert q\rvert}\max(\lVert x\rVert^{p-q},\lVert y\rVert^{p-q})\alpha_{q}[x,y].\nonumber
\end{align}
In particular if $q=1$, then
\begin{align}\label{2}
\frac{\lvert p\rvert}{\lvert p\rvert+\lvert p-1\rvert}&\min(\lVert x\rVert^{p-1},\lVert y\rVert^{p-1})\lVert x-y\rVert\nonumber\\
&\leq\alpha_p[x,y]\\
&\leq(1+\lvert p-1\rvert)\max(\lVert x\rVert^{p-1},\lVert y\rVert^{p-1})\lVert x-y\rVert.\nonumber
\end{align}
\end{lemma}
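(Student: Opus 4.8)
The plan is to relate $\alpha_p$ and $\alpha_q$ through a single algebraic decomposition followed by an elementary scalar inequality for differences of powers. Writing $a=\lVert x\rVert$, $b=\lVert y\rVert$, $u=x/a$, $v=y/b$, so that $\alpha_r[x,y]=\lVert a^r u-b^r v\rVert$ for every real $r$, I would start from the identity
\[
a^p u-b^p v=a^{p-q}(a^q u-b^q v)+(a^{p-q}-b^{p-q})b^q v,
\]
together with its companion obtained by interchanging the roles of $a$ and $b$ (leading coefficient $b^{p-q}$ and remainder factor $a^q$). Since the asserted inequality is symmetric in $x$ and $y$, I may assume $a\ge b$; then I would use the displayed identity when $q>0$ and its companion when $q<0$, so that in both cases the trailing factor is exactly $\min(a^q,b^q)$. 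Taking norms, using $\lVert u\rVert=\lVert v\rVert=1$, and bounding the leading coefficient (which is $a^{p-q}$ or $b^{p-q}$) by $\max(a^{p-q},b^{p-q})$ gives
\[
\alpha_p[x,y]\le \max(a^{p-q},b^{p-q})\,\alpha_q[x,y]+|a^{p-q}-b^{p-q}|\min(a^q,b^q).
\]

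The heart of the matter, which I expect to be the main obstacle, is the scalar estimate
\[
|a^{p-q}-b^{p-q}|\min(a^q,b^q)\le\frac{|p-q|}{|q|}\max(a^{p-q},b^{p-q})\,|a^q-b^q|.
\]
I would prove it by observing that both sides scale like $\lambda^p$ under $(a,b)\mapsto(\lambda a,\lambda b)$ and are symmetric, so I may take $a\ge b$ and set $\rho=a/b\ge1$; a direct computation then reduces the claim to the one-variable inequality $1-\rho^{-A}\le \frac{A}{B}(\rho^{B}-1)$ with $A=|p-q|$ and $B=|q|>0$. This last inequality follows since the difference of its two sides vanishes at $\rho=1$ and has derivative $A\rho^{-A-1}(\rho^{A+B}-1)\ge0$ for $\rho\ge1$. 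Combining the scalar estimate with the reverse triangle inequality $|a^q-b^q|=\big|\,\lVert a^q u\rVert-\lVert b^q v\rVert\,\big|\le\alpha_q[x,y]$ controls the remainder by $\frac{|p-q|}{|q|}\max(a^{p-q},b^{p-q})\,\alpha_q[x,y]$, and the constant assembles as $1+\frac{|p-q|}{|q|}=\frac{|q|+|p-q|}{|q|}$, yielding the upper bound in \eqref{1}.

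For the lower bound I would avoid a second computation and instead invoke the upper bound with the roles of $p$ and $q$ exchanged, which is legitimate once $p\ne0$; for $p=0$ the lower bound is trivial because its coefficient vanishes. This gives $\alpha_q[x,y]\le\frac{|p|+|p-q|}{|p|}\max(a^{q-p},b^{q-p})\,\alpha_p[x,y]$, and rearranging together with $1/\max(a^{q-p},b^{q-p})=\min(a^{p-q},b^{p-q})$ produces exactly the left-hand inequality of \eqref{1}. Finally, \eqref{2} is the specialization $q=1$: there $\alpha_1[x,y]=\lVert x-y\rVert$, the upper constant becomes $1+|p-1|$ and the lower constant $\frac{|p|}{|p|+|p-1|}$, so \eqref{2} is immediate. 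The only points demanding care are the sign bookkeeping that selects which form of the decomposition to use and the verification that the exponents recombine to $\max(a^{p-q},b^{p-q})$, both of which are settled cleanly by the scalar estimate above.
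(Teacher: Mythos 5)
Your proof is correct, and its skeleton coincides with the paper's: the same splitting $a^pu-b^pv=a^{p-q}(a^qu-b^qv)+(a^{p-q}-b^{p-q})b^qv$ (and its companion with $a,b$ exchanged), the reverse triangle inequality $\lvert a^q-b^q\rvert\le\alpha_q[x,y]$ to absorb the remainder, and the lower bound obtained by swapping the roles of $p$ and $q$ in the upper bound. Where you diverge is the scalar step, which you rightly identify as the heart of the matter. The paper estimates $\lvert a^{p-q}-b^{p-q}\rvert$ via the Mean Value Theorem applied to $t\mapsto t^{(p-q)/q}$, producing an intermediate factor $\eta^{(p-2q)/q}\le\max(a^{p-2q},b^{p-2q})$; this yields two symmetric inequalities with a three-term maximum $\max(a^{p-q},a^{p-2q}b^q,b^{p-q})$, which is then reduced to $\max(a^{p-q},b^{p-q})$ by a sign analysis on $q$. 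You instead do the sign bookkeeping up front — choosing whichever decomposition makes the remainder carry exactly $\min(a^q,b^q)$ — and then prove one clean scalar inequality, $1-\rho^{-A}\le\frac{A}{B}(\rho^B-1)$ with $A=\lvert p-q\rvert$, $B=\lvert q\rvert$, by homogeneity and the derivative check $g'(\rho)=A\rho^{-A-1}(\rho^{A+B}-1)\ge0$; I confirmed that all four sign combinations of $p-q$ and $q$ reduce to this single inequality, so the argument is airtight. Your packaging is marginally more self-contained (no MVT, no intermediate point, no monotonicity claim about $t^{(p-2q)/q}$), and it has one genuine advantage in rigor: you explicitly dispose of $p=0$ in the lower bound (where the constant $\frac{\lvert p\rvert}{\lvert p\rvert+\lvert p-q\rvert}$ vanishes), a point the paper's "interchange $p$ and $q$" step silently requires, since applying the upper bound with roles swapped needs $p\neq0$.
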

\begin{proof}
We have
\begin{align*}
\alpha_p[x,y]&=\left\lVert\lVert x\rVert^{p-1}x-\lVert y\rVert^{p-1}y\right\rVert\\
&\leq\left\lVert\lVert x\rVert^{p-q}\lVert x\rVert^{q-1}x-\lVert x\rVert^{p-q}\lVert y\rVert^{q-1}y\right\rVert\\
&+\left\lVert\lVert x\rVert^{p-q}\lVert y\rVert^{q-1}y-\lVert y\rVert^{p-q}\lVert y\rVert^{q-1}y\right\rVert\\
&=\lVert x\rVert^{p-q}\alpha_{q}[x,y]+\lVert y\rVert^{q}\left\lvert\lVert x\rVert^{p-q}-\lVert y\rVert^{p-q}\right\rvert.
\end{align*}
Consider the function $f(t)=t^{\frac{p-q}{q}}$ on the closed interval with endpoints $\lVert x\rVert^{q}$ and $\lVert y\rVert^{q}$. By the Mean-Value Theorem, there exists a point $\eta$ between $\lVert x\rVert^{q}$ and $\lVert y\rVert^{q}$ such that
\begin{equation*}
\left\lvert\lVert x\rVert^{p-q}-\lVert y\rVert^{p-q}\right\rvert=\lvert f(\lVert x\rVert^{q})-f(\lVert y\rVert^{q})\rvert
=\big\lvert\frac{p-q}{q}\big\rvert\eta^{\frac{p-2q}{q}}\left\lvert\lVert x\rVert^{q}-\lVert y\rVert^{q}\right\rvert.
\end{equation*}
Since the function $t^{\frac{p-2q}{q}}$ is monotone, we obtain
\begin{equation*}
\eta^{\frac{p-2q}{q}}\leq\max(\lVert x\rVert^{p-2q},\lVert y\rVert^{p-2q}),
\end{equation*}
whence
\small\begin{align*}
\alpha_p[x,y]&\leq\lVert x\rVert^{p-q}\alpha_{q}[x,y]+\big\lvert\frac{p-q}{q}\big\rvert\max(\lVert x\rVert^{p-2q}\lVert y\rVert^{q},\lVert y\rVert^{p-q})\left\lvert\lVert x\rVert^{q}-\lVert y\rVert^{q}\right\rvert\\
&\leq\big(\lVert x\rVert^{p-q}+\big\lvert\frac{p-q}{q}\big\rvert\max(\lVert x\rVert^{p-2q}\lVert y\rVert^{q},\lVert y\rVert^{p-q})\big)\alpha_q[x,y].
\end{align*}
Thus,
\begin{equation}\label{89}
\alpha_p[x,y]\leq\frac{\lvert q\rvert+\lvert p-q\rvert}{\lvert q\rvert}\max(\lVert x\rVert^{p-q},\lVert x\rVert^{p-2q}\lVert y\rVert^{q},\lVert y\rVert^{p-q})\alpha_q[x,y].
\end{equation}
By symmetry, we have
\begin{equation}\label{99}
\alpha_p[x,y]\leq\frac{\lvert q\rvert+\lvert p-q\rvert}{\lvert q\rvert}\max(\lVert y\rVert^{p-q},\lVert y\rVert^{p-2q}\lVert x\rVert^{q},\lVert x\rVert^{p-q})\alpha_q[x,y].
\end{equation}
For proving $(\ref{1})$ we can assume that $\lVert x\rVert\leq\lVert y\rVert$. If $q<0$, then $\lVert y\rVert^{q}\leq\lVert x\rVert^{q}$ and so $\lVert x\rVert^{p-2q}\lVert y\rVert^{q}\leq\lVert x\rVert^{p-q}$. Now, the right inequality in $(\ref{1})$ follows from $(\ref{89})$. Similarly if $q>0$, then $(\ref{99})$ yields the right inequality in $(\ref{1})$.
The left inequality in $(\ref{1})$ follows from the right one by interchanging the roles of $p$ and $q$.
\end{proof}
\begin{theorem}
The following statements hold.
\begin{itemize}
\item [\rm{(i)}]~ If $pq>0$, then for each $\emptyset\neq A\subseteq \mathcal{X}\smallsetminus{\lbrace 0\rbrace}$, the metric space $(A,\alpha_p)$ is complete if and only if $(A,\alpha_q)$ is complete.
\item [\rm{(ii)}]~ If $p>0$ and $q<0$, then there exist nonempty sets $A, B\subseteq \mathcal{X}\smallsetminus{\lbrace 0\rbrace}$ such that $A$ is $\alpha_p$-complete but not $\alpha_q$-complete and $B$ is $\alpha_q$-complete but not $\alpha_p$-complete.
\end{itemize}
\end{theorem}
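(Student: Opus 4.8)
The proof naturally divides along the two parts, and both draw on the two preceding lemmas: the norm-boundedness lemma and the comparison inequality $(\ref{1})$ of Lemma~\ref{ppoo}. For part (i) my plan is to use the symmetry between $p$ and $q$ to reduce the biconditional to the single implication ``$(A,\alpha_p)$ complete $\Rightarrow (A,\alpha_q)$ complete,'' since interchanging $p$ and $q$ then yields the reverse implication. Assume first $p,q>0$, the case $p,q<0$ being parallel with the roles of the upper and lower norm bounds exchanged. Given an $\alpha_q$-Cauchy sequence $\{x_n\}\subseteq A$, the crucial step is to secure \emph{uniform} two-sided norm bounds $0<\delta\le\lVert x_n\rVert\le M$. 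The lower bound $\delta$ is furnished by the preceding norm-boundedness lemma applied to the $\alpha_p$-complete set $A$ (for $p>0$ such an $A$ is norm-bounded from below), and the upper bound $M$ comes from the same lemma applied with parameter $q$ to $\{x_n\}$ (for $q>0$ an $\alpha_q$-Cauchy sequence is norm-bounded from above).

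With these bounds in place, the powers $\lVert x_n\rVert^{p-q}$ are squeezed between positive constants, so the upper estimate in $(\ref{1})$ yields a constant $C_1$ with $\alpha_p[x_m,x_n]\le C_1\,\alpha_q[x_m,x_n]$; hence $\{x_n\}$ is $\alpha_p$-Cauchy and, by $\alpha_p$-completeness, $\alpha_p[x_n,x]\to0$ for some $x\in A$. To transfer convergence back to $\alpha_q$, I would invoke $(\ref{1})$ with $p$ and $q$ interchanged: since $\lVert x_n\rVert\in[\delta,M]$ and $\lVert x\rVert$ is a fixed positive number, the maximum of powers appearing there is again bounded, so $\alpha_q[x_n,x]\le C_2\,\alpha_p[x_n,x]\to0$. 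Thus every $\alpha_q$-Cauchy sequence in $A$ has an $\alpha_q$-limit in $A$, and $(A,\alpha_q)$ is complete.

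For part (ii) my plan is to exhibit the failure of equivalence on a single ray, where $\alpha_p$ is explicit: fixing a unit vector $a$, one has $\alpha_p[\lambda a,\mu a]=\lvert\lambda^p-\mu^p\rvert$ for $\lambda,\mu>0$. I would take $A=\{na:n\in\mathbb{N}\}$ and $B=\{\tfrac1n a:n\in\mathbb{N}\}$. For $A$ with $p>0$, any $\alpha_p$-Cauchy sequence $\{n_k a\}$ forces $\{n_k^p\}$ to be bounded, so $\{n_k\}$ assumes only finitely many values and, being Cauchy, is eventually constant; hence it converges in $A$, and $A$ is $\alpha_p$-complete. For the same $A$ with $q<0$, the sequence $\{na\}$ satisfies $\alpha_q[ma,na]=\lvert m^{-\lvert q\rvert}-n^{-\lvert q\rvert}\rvert\to0$, so it is $\alpha_q$-Cauchy, yet $\alpha_q[na,n_0a]\to n_0^{-\lvert q\rvert}>0$ for every fixed $n_0a\in A$, so it has no $\alpha_q$-limit in $A$; hence $A$ is not $\alpha_q$-complete. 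The set $B$ is handled by the mirror computation, trading $p\leftrightarrow q$ and $n\leftrightarrow\tfrac1n$, so that $\{\tfrac1n a\}$ witnesses $\alpha_q$-completeness and fails $\alpha_p$-completeness.

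I expect the main obstacle to lie not in the comparison inequality $(\ref{1})$ itself, but in converting its norm-dependent constants into genuine sequence-independent ones; this requires the uniform bounds $0<\delta\le\lVert x_n\rVert\le M$, and it is exactly here that the hypothesis $pq>0$ is used, through the preceding norm-boundedness lemma. The explicit sequences of part (ii) clarify why this breaks down when $p>0$ and $q<0$: the obstruction for $\alpha_p$ (norms tending to $0$) and that for $\alpha_q$ (norms tending to $\infty$) decouple, so no common uniform bound exists and the two completeness notions genuinely differ.
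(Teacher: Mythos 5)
Your proof is correct and follows essentially the same route as the paper: the same two preceding lemmas supply the uniform two-sided norm bounds, the comparison inequality $(\ref{1})$ converts an $\alpha_q$-Cauchy sequence into an $\alpha_p$-Cauchy one, and radial sets along a fixed unit vector witness part (ii). The only cosmetic differences are that the paper transfers the limit from $\alpha_p$ back to $\alpha_q$ by the consistency of the two metrics (Theorem \ref{hgi}) rather than by a second application of $(\ref{1})$, and in part (ii) it takes the rays $\{\lambda a:\lambda\geq1\}$ and $\{\lambda a:0<\lambda\leq1\}$ with non-completeness deduced from the norm-boundedness lemma, where you use the discrete sets $\{na\}$ and $\{\tfrac{1}{n}a\}$ with a direct witness sequence.
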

\begin{proof}
$(i)$
Let $\emptyset\neq A\subseteq \mathcal{X}\smallsetminus{\lbrace 0\rbrace}$ be $\alpha_p$-complete. Assume $\{x_n\}$ is a $\alpha_q$-Cauchy sequence in $A$. 
First, suppose that $p,q>0$. Since $A$ is $\alpha_p$-complete, $A$, and as a result, $\{x_n\}$ is norm-bounded from below. On the other hand, since $\{x_n\}$ is $\alpha_q$-Cauchy, $\{x_n\}$ is norm-bounded from above. Thus, $\{x_n\}$ is norm-bounded from below and above, and so there exists $0\leq M<\infty$ such that $\max(\lVert x_m\rVert^{p-q},\lVert x_n\rVert^{p-q})\leq M~(m,n=1,2,\ldots)$. Therefore by the right hand side of inequality $(\ref{1})$,
\begin{equation*}
\alpha_p[x_m,x_n]\leq\frac{\lvert q\rvert+\lvert p-q\rvert}{\lvert q\rvert}M\alpha_q[x_m,x_n]\rightarrow 0\quad(m,n\rightarrow\infty).
\end{equation*}
Hence, $\{x_n\}$ is a $\alpha_p$-Cauchy sequence in $A$. Since $A$ is $\alpha_p$-complete, there exists $x\in A$ such that $\lim_{n\rightarrow\infty}\alpha_p[x_n,x]=0$, and by the consistency of $\alpha_p$ and $\alpha_q$, we reach $\lim_{n\rightarrow\infty}\alpha_q[x_n,x]=0$. So, $A$ is $\alpha_q$-complete.
\par Now, let $p,q<0$. Since $A$ is $\alpha_p$-complete, $\{x_n\}$ is norm-bounded from above. On the other hand, since $\{x_n\}$ is $\alpha_q$-Cauchy, $\{x_n\}$ is norm-bounded from below. So, $\{x_n\}$ is again norm-bounded from above and below. Similar to the above argument, there exists $x\in A$ such that $\lim_{n\rightarrow\infty}\alpha_q[x_n,x]=0$, and therefore $A$ is $\alpha_q$-complete.\par
$(ii)$ Take a unit vector $a\in \mathcal{X}$ and let $A=\{\lambda a:\lambda\geq1\}$ and $B=\{\lambda a:0<\lambda\leq1\}$. It is easily seen that $A$ is $\alpha_p$-complete. Since $q<0$ and $A$ is not norm-bounded from above, $A$ is not $\alpha_q$-complete. Similarly $B$ is $\alpha_q$-complete, but not $\alpha_p$-complete.
\end{proof}

\subsection{$p$-angular distance in inner product spaces}

In this part, we suppose that $(\mathcal{X}, \langle\cdot,\cdot\rangle)$ is a real inner product space with the induced norm $\Vert \cdot \Vert$, defined by $\lVert x\rVert^{2}=\langle x,x\rangle$.
\begin{proposition}\label{667}
Let $\mathcal{X}$ be an inner product space, $x,y\in \mathcal{X}\smallsetminus{\lbrace 0\rbrace}$ and $p\in\mathbb{R}$. Then the following properties hold.
\begin{itemize}
\item[(i)]   $\alpha_{p}[x,y]\leq\beta_{p}[x,y]$ for all $p<1$,\\
\item[(ii)]  $\alpha_{p}[x,y]=\beta_{p}[x,y]$ for $p=1$,\\
\item[(iii)] $\alpha_{p}[x,y]\geq\beta_{p}[x,y]$ for all $p>1$.
\end{itemize}
In each of $(i)$ and $(iii)$ equality holds if and only if $\Vert x\Vert=\Vert y\Vert$.
\end{proposition}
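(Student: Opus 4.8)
The plan is to reduce all three claims, together with the equality cases, to a single algebraic identity obtained by exploiting the inner product structure. Writing $a=\lVert x\rVert$ and $b=\lVert y\rVert$ and using $\lVert u\rVert^{2}=\langle u,u\rangle$, I would first expand the two squared distances as
\[
\alpha_{p}[x,y]^{2}=a^{2p}-2a^{p-1}b^{p-1}\langle x,y\rangle+b^{2p},
\]
\[
\beta_{p}[x,y]^{2}=a^{2}b^{2p-2}-2a^{p-1}b^{p-1}\langle x,y\rangle+a^{2p-2}b^{2}.
\]
The decisive feature is that the two middle terms are identical, so the inner product $\langle x,y\rangle$ cancels upon subtraction. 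A short factorization then yields the clean identity
\[
\beta_{p}[x,y]^{2}-\alpha_{p}[x,y]^{2}=(a^{2}-b^{2})\bigl(b^{2p-2}-a^{2p-2}\bigr).
\]

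With this identity in hand, all three parts follow from the monotonicity of $t\mapsto t^{p-1}$ on $(0,\infty)$. When $p=1$ the second factor vanishes identically, so $\alpha_{1}=\beta_{1}$, which proves (ii). When $p>1$ the map $t\mapsto t^{p-1}$ is strictly increasing, so the factors $a^{2}-b^{2}$ and $b^{2p-2}-a^{2p-2}$ always have opposite signs; hence the right-hand side is nonpositive, giving $\alpha_{p}[x,y]\geq\beta_{p}[x,y]$ and establishing (iii). When $p<1$ the map is strictly decreasing, the two factors share a common sign, the right-hand side is nonnegative, and (i) follows. In both (i) and (iii) the product is zero exactly when $a^{2}=b^{2}$, that is, when $\lVert x\rVert=\lVert y\rVert$, which accounts for the equality clause.

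There is no serious obstacle here: the argument is essentially a one-line computation followed by a sign check. The only point demanding care is verifying that the cross terms cancel exactly---this is precisely where the inner product hypothesis is indispensable, and where the corresponding statement would break down in a general normed space---and then keeping track of the correct direction of monotonicity on either side of $p=1$.
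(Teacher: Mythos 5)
Your proof is correct and follows essentially the same route as the paper, whose entire argument is the single identity $\alpha_{p}[x,y]^{2}-\beta_{p}[x,y]^{2}=(\Vert x\Vert^{2}-\Vert y\Vert^{2})(\Vert x\Vert^{2p-2}-\Vert y\Vert^{2p-2})$, which is exactly your identity with both sides negated. Your expansion showing the cancellation of the cross terms and the subsequent sign analysis via the monotonicity of $t\mapsto t^{2p-2}$ simply make explicit the steps the paper leaves to the reader.
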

\begin{proof}
It is sufficient to note that
\begin{equation*}\label{3}
\alpha_{p}[x,y]^{2}-\beta_{p}[x,y]^{2}=(\Vert x\Vert^{2}-\Vert y\Vert^{2})(\Vert x\Vert^{2p-2}-\Vert y\Vert^{2p-2}).
\end{equation*}
\end{proof}
\begin{proposition}\label{867}
Let $\mathcal{X}$ be an inner product space, $p\in\mathbb{R}$ and $x,y\in \mathcal{X}\smallsetminus{\lbrace 0\rbrace}$. Then
\small\begin{equation*}
\alpha_p[x, y]=\sqrt{(\lVert x\rVert^{p+1}-\lVert y\rVert^{p+1})(\lVert x\rVert^{p-1}-\lVert y\rVert^{p-1})+\lVert x\rVert^{p-1}\lVert y\rVert^{p-1}\lVert x-y\rVert^{2}}.
\end{equation*}
In particular if $p=0$, then
\begin{equation*}
\alpha[x,y]=\sqrt{\frac{\lVert x-y\rVert^{2}-(\lVert x\rVert-\lVert y\rVert)^{2}}{\lVert x\rVert\lVert y\rVert}}.
\end{equation*}
\end{proposition}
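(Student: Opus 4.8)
The plan is to verify the identity by squaring both sides and comparing the resulting expressions term by term in the inner product. First I would expand
\[
\alpha_p[x,y]^2=\left\lVert \lVert x\rVert^{p-1}x-\lVert y\rVert^{p-1}y\right\rVert^2
\]
using bilinearity and the relation $\lVert z\rVert^2=\langle z,z\rangle$. Writing $\lVert x\rVert^{2(p-1)}\lVert x\rVert^2=\lVert x\rVert^{2p}$ and likewise for $y$, this gives
\[
\alpha_p[x,y]^2=\lVert x\rVert^{2p}+\lVert y\rVert^{2p}-2\lVert x\rVert^{p-1}\lVert y\rVert^{p-1}\langle x,y\rangle .
\]

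Next I would expand the quantity under the square root on the right-hand side. The product factor $(\lVert x\rVert^{p+1}-\lVert y\rVert^{p+1})(\lVert x\rVert^{p-1}-\lVert y\rVert^{p-1})$ contributes $\lVert x\rVert^{2p}+\lVert y\rVert^{2p}$ together with the two mixed terms $-\lVert x\rVert^{p+1}\lVert y\rVert^{p-1}-\lVert x\rVert^{p-1}\lVert y\rVert^{p+1}$. For the second summand I substitute $\lVert x-y\rVert^2=\lVert x\rVert^2-2\langle x,y\rangle+\lVert y\rVert^2$, so that $\lVert x\rVert^{p-1}\lVert y\rVert^{p-1}\lVert x-y\rVert^2$ produces exactly $+\lVert x\rVert^{p+1}\lVert y\rVert^{p-1}+\lVert x\rVert^{p-1}\lVert y\rVert^{p+1}$ along with the cross term $-2\lVert x\rVert^{p-1}\lVert y\rVert^{p-1}\langle x,y\rangle$. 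The crucial observation is that the four mixed norm-power terms cancel in pairs, so the full right-hand side collapses to precisely $\lVert x\rVert^{2p}+\lVert y\rVert^{2p}-2\lVert x\rVert^{p-1}\lVert y\rVert^{p-1}\langle x,y\rangle$, matching $\alpha_p[x,y]^2$; taking square roots yields the claimed formula. There is no genuine obstacle here, as the result is a direct algebraic verification; the only care needed is in tracking the exponents on the norms so that the mixed terms are correctly identified and cancelled.

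For the particular case $p=0$, I would substitute into the general formula and simplify the product $(\lVert x\rVert-\lVert y\rVert)(\lVert x\rVert^{-1}-\lVert y\rVert^{-1})$. Expanding gives $2-(\lVert x\rVert^2+\lVert y\rVert^2)/(\lVert x\rVert\lVert y\rVert)$, which equals $-(\lVert x\rVert-\lVert y\rVert)^2/(\lVert x\rVert\lVert y\rVert)$. Adding the remaining term $\lVert x-y\rVert^2/(\lVert x\rVert\lVert y\rVert)$ over the common denominator $\lVert x\rVert\lVert y\rVert$ then produces exactly the stated expression for $\alpha[x,y]$.
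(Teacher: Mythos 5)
Your proposal is correct and follows essentially the same route as the paper: the paper's proof also expands $\alpha_p[x,y]^2=\lVert x\rVert^{2p}+\lVert y\rVert^{2p}-2\lVert x\rVert^{p-1}\lVert y\rVert^{p-1}\langle x,y\rangle$ and substitutes $2\langle x,y\rangle=\lVert x\rVert^{2}+\lVert y\rVert^{2}-\lVert x-y\rVert^{2}$, which is exactly your term-by-term cancellation read in the opposite direction. Your explicit simplification of the $p=0$ case is also correct (the paper leaves it implicit).
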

\begin{proof}
The first identity follows from
\begin{align*}
\alpha_p^2[x,y]&=
\lVert x\rVert^{2p}+\lVert y\rVert^{2p}-2\lVert x\rVert^{p-1}\lVert y\rVert^{p-1}Re\langle x, y\rangle\\
&=\lVert x\rVert^{2p}+\lVert y\rVert^{2p}-\lVert x\rVert^{p-1}\lVert y\rVert^{p-1}(\lVert x\rVert^{2}+\lVert y\rVert^{2}-\lVert x-y\rVert^{2})\\
&=(\lVert x\rVert^{p+1}-\lVert y\rVert^{p+1})(\lVert x\rVert^{p-1}-\lVert y\rVert^{p-1})+\lVert x\rVert^{p-1}\lVert y\rVert^{p-1}\lVert x-y\rVert^{2}.
\end{align*}
\end{proof}
Now, we show the relation between the $p$-angular distance and the errors of the Cauchy-Schwarz inequality; $\lVert x\rVert\lVert y\rVert\pm\langle x,y\rangle\geq0$. For this reason we need the following elementary lemma.
\begin{lemma}\label{lm}
Let $\mathcal{X}$ be an inner product space. If $x$ and $y$ are linearly independent vectors of $\mathcal{X}$ and $t\in\mathbb{R}$,
then
\begin{equation*}
\lVert x+ty\rVert=\frac{\sqrt{\lVert x\rVert^{2}\lVert y\rVert^{2}-\langle x,y\rangle^{2}}}{\lVert y\rVert}\sum_{k=0}^{\infty}\binom{\frac{1}{2}}{k}\bigg(\frac{t\lVert y\rVert^{2}+\langle x,y\rangle}{\sqrt{\lVert x\rVert^{2}\lVert y\rVert^{2}-\langle x,y\rangle^{2}}}\bigg)^{2k},
\end{equation*}
whenever,
\begin{equation}\label{hg}
\frac{-\langle x,y\rangle-\sqrt{\lVert x\rVert^{2}\lVert y\rVert^{2}-\langle x,y\rangle^{2}}}{\lVert y\rVert^{2}}\leq t\leq\frac{-\langle x,y\rangle+\sqrt{\lVert x\rVert^{2}\lVert y\rVert^{2}-\langle x,y\rangle^{2}}}{\lVert y\rVert^{2}}.
\end{equation}
\end{lemma}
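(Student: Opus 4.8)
The plan is to reduce the stated vector identity to the classical binomial expansion $(1+v)^{1/2}=\sum_{k=0}^{\infty}\binom{1/2}{k}v^{k}$ in a single scalar variable $v$. First I would expand the squared norm straight from the inner product,
\begin{equation*}
\lVert x+ty\rVert^{2}=\lVert x\rVert^{2}+2t\langle x,y\rangle+t^{2}\lVert y\rVert^{2},
\end{equation*}
and complete the square in $t$, rewriting this quadratic as $\lVert y\rVert^{2}\big(t+\tfrac{\langle x,y\rangle}{\lVert y\rVert^{2}}\big)^{2}+\tfrac{\lVert x\rVert^{2}\lVert y\rVert^{2}-\langle x,y\rangle^{2}}{\lVert y\rVert^{2}}$.

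Next I would abbreviate $D=\lVert x\rVert^{2}\lVert y\rVert^{2}-\langle x,y\rangle^{2}$ and observe that, because $x$ and $y$ are linearly independent, the Cauchy--Schwarz inequality is \emph{strict}, so $D>0$ and $\sqrt{D}$ is a genuine positive real number. Introducing the scalar
\begin{equation*}
u=\frac{t\lVert y\rVert^{2}+\langle x,y\rangle}{\sqrt{D}},
\end{equation*}
the completed square collapses to $\lVert x+ty\rVert^{2}=\tfrac{D}{\lVert y\rVert^{2}}(1+u^{2})$, whence $\lVert x+ty\rVert=\tfrac{\sqrt{D}}{\lVert y\rVert}\sqrt{1+u^{2}}$. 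Comparing with the asserted formula, it then remains only to verify the scalar identity $\sqrt{1+u^{2}}=\sum_{k=0}^{\infty}\binom{1/2}{k}u^{2k}$.

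Finally I would invoke the binomial series for $(1+v)^{1/2}$ with $v=u^{2}\ge 0$. This is exactly where hypothesis (\ref{hg}) enters: the two displayed bounds on $t$ are precisely the inequalities $-\sqrt{D}\le t\lVert y\rVert^{2}+\langle x,y\rangle\le\sqrt{D}$, that is $\lvert u\rvert\le 1$, that is $0\le v\le 1$, which is exactly the range on which the binomial series converges to $(1+v)^{1/2}$. The only point that is not pure algebra — and hence the main obstacle — is convergence at the boundary value $v=1$ (equivalently, at the two extreme admissible values of $t$, both of which give $u^{2}=1$), where the expansion sits on the edge of its disk of convergence. Since the exponent $1/2$ is positive, the binomial series converges absolutely at $v=1$, and by Abel's theorem its sum there is the correct value $\sqrt{2}$; for interior $t$ one has $v<1$ and convergence is automatic. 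Everything else is the routine computation indicated above.
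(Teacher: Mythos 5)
Your proof is correct and follows essentially the same route as the paper: expand $\lVert x+ty\rVert^{2}$, complete the square in $t$, factor out $\sqrt{\lVert x\rVert^{2}\lVert y\rVert^{2}-\langle x,y\rangle^{2}}/\lVert y\rVert$ (positive by strict Cauchy--Schwarz for linearly independent $x,y$), and apply the binomial series to $(1+u^{2})^{1/2}$ with $u=\bigl(t\lVert y\rVert^{2}+\langle x,y\rangle\bigr)/\sqrt{\lVert x\rVert^{2}\lVert y\rVert^{2}-\langle x,y\rangle^{2}}$, the hypothesis (\ref{hg}) being exactly $\lvert u\rvert\leq 1$. Your explicit treatment of the endpoint case $u^{2}=1$ (absolute convergence of the binomial series for exponent $\tfrac{1}{2}$ together with Abel's theorem) is a small extra care that the paper leaves implicit in its citation of the binomial series.
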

\begin{proof}
Employing the binomial series \cite{kh}, we get
\begin{align*}
\lVert x+ty\rVert&=(t^{2}\lVert y\rVert^{2}+2t\langle x,y\rangle+\lVert x\rVert^{2})^{\frac{1}{2}}\\
&=\bigg[\bigg(t\lVert y\rVert+\frac{\langle x,y\rangle}{\lVert y\rVert}\bigg)^{2}+\lVert x\rVert^{2}-\frac{\langle x,y\rangle^{2}}{\lVert y\rVert^{2}}\bigg]^{\frac{1}{2}}\\
&=\frac{\sqrt{\lVert x\rVert^{2}\lVert y\rVert^{2}-\langle x,y\rangle^{2}}}{\lVert y\rVert}\bigg[\bigg(\frac{t\lVert y\rVert^{2}+\langle x,y\rangle}{\sqrt{\lVert x\rVert^{2}\lVert y\rVert^{2}-\langle x,y\rangle^{2}}}\bigg)^{2}+1\bigg]^{\frac{1}{2}}\\
&=\frac{\sqrt{\lVert x\rVert^{2}\lVert y\rVert^{2}-\langle x,y\rangle^{2}}}{\lVert y\rVert}\sum_{k=0}^{\infty}\binom{\frac{1}{2}}{k}\bigg(\frac{t\lVert y\rVert^{2}+\langle x,y\rangle}{\sqrt{\lVert x\rVert^{2}\lVert y\rVert^{2}-\langle x,y\rangle^{2}}}\bigg)^{2k},
\end{align*}
whenever
\begin{equation*}
\bigg\lvert\frac{t\lVert y\rVert^{2}+\langle x,y\rangle}{\sqrt{\lVert x\rVert^{2}\lVert y\rVert^{2}-\langle x,y\rangle^{2}}}\bigg\rvert\leq 1,
\end{equation*}
which is equivalent to $(\ref{hg})$.
\end{proof}
\begin{theorem}
Let $\mathcal{X}$ be an inner product space and $p\in\mathbb{R}$. If $x$ and $y$ are linearly independent vectors of $\mathcal{X}$,
then
{\footnotesize\begin{equation}\label{kkk}
\alpha_p[x,y]=\frac{\sqrt{\lVert x\rVert^{2}\lVert y\rVert^{2}-\langle x,y\rangle^{2}}}{\lVert x\rVert^{1-p}\lVert y\rVert}\sum_{k=0}^{\infty}\binom{\frac{1}{2}}{k}\bigg(\frac{\lVert x\rVert^{1-p}\lVert y\rVert^{1+p}-\langle x,y\rangle}{\sqrt{\lVert x\rVert^{2}\lVert y\rVert^{2}-\langle x,y\rangle^{2}}}\bigg)^{2k},
\end{equation}}
whenever $\lVert y\rVert^{p}\leq\sqrt{2}\lVert x\rVert^{p}$ and
{\tiny\begin{equation}\label{okl}
\Big(-1\leq\Big)\frac{\lVert x\rVert^{-p}\lVert y\rVert^{p}-\sqrt{2-\lVert x\rVert^{-2p}\lVert y\rVert^{2p}}}{2}\leq\frac{\langle x,y\rangle}{\lVert x\rVert\lVert y\rVert}\leq\frac{\lVert x\rVert^{-p}\lVert y\rVert^{p}+\sqrt{2-\lVert x\rVert^{-2p}\lVert y\rVert^{2p}}}{2}\Big(\leq1\Big).
\end{equation}}
Similar expansion holds if we change the roles of $x$ and $y$ with each other.
\end{theorem}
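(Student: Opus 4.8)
The plan is to reduce the claimed identity to Lemma \ref{lm} by an appropriate choice of the parameter $t$, and then to unwind the admissibility condition $(\ref{hg})$ into the two stated constraints.

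First I would factor the $p$-angular distance so that it takes the form $\lVert x+ty\rVert$ appearing in Lemma \ref{lm}. Writing
\[
\alpha_p[x,y]=\big\lVert\lVert x\rVert^{p-1}x-\lVert y\rVert^{p-1}y\big\rVert=\lVert x\rVert^{p-1}\big\lVert x+ty\big\rVert,\qquad t:=-\lVert x\rVert^{1-p}\lVert y\rVert^{p-1},
\]
I would substitute this value of $t$ into the conclusion of Lemma \ref{lm}. Since $t\lVert y\rVert^{2}+\langle x,y\rangle=\langle x,y\rangle-\lVert x\rVert^{1-p}\lVert y\rVert^{p+1}$ and this quantity enters the series only through its square, the summand becomes exactly the one in $(\ref{kkk})$; moreover the prefactor $\lVert x\rVert^{p-1}\cdot\frac{1}{\lVert y\rVert}$ equals $\frac{1}{\lVert x\rVert^{1-p}\lVert y\rVert}$, so the identity $(\ref{kkk})$ drops out at once.

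The remaining and more delicate task is to show that the admissibility condition $(\ref{hg})$ of Lemma \ref{lm}, which in the form $\big\lvert t\lVert y\rVert^{2}+\langle x,y\rangle\big\rvert\leq\sqrt{\lVert x\rVert^{2}\lVert y\rVert^{2}-\langle x,y\rangle^{2}}$ is what must hold for our chosen $t$, is equivalent to $\lVert y\rVert^{p}\leq\sqrt{2}\,\lVert x\rVert^{p}$ together with $(\ref{okl})$. For this I would introduce the normalized quantities $s:=\frac{\langle x,y\rangle}{\lVert x\rVert\lVert y\rVert}$ and $r:=\lVert x\rVert^{-p}\lVert y\rVert^{p}$. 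Dividing through by $\lVert x\rVert\lVert y\rVert>0$ recasts the condition as $\lvert s-r\rvert\leq\sqrt{1-s^{2}}$. Here linear independence of $x$ and $y$ forces $s^{2}<1$ by the strict Cauchy--Schwarz inequality, so the right-hand side is nonnegative and squaring is reversible; this yields the quadratic inequality $2s^{2}-2rs+(r^{2}-1)\leq0$.

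Finally I would analyze this quadratic in $s$. Its discriminant is $8-4r^{2}$, so a real solution set exists precisely when $r^{2}\leq2$, i.e. when $\lVert y\rVert^{p}\leq\sqrt{2}\,\lVert x\rVert^{p}$; and in that case the solution set is the interval between the roots $\frac{r\pm\sqrt{2-r^{2}}}{2}$, which is exactly $(\ref{okl})$ after rewriting $r$ and $r^{2}$ in terms of the norms. A short check that $\max_{0<r\leq\sqrt{2}}\big(r+\sqrt{2-r^{2}}\big)=2$ confirms that these bounds indeed lie in $[-1,1]$, consistent with $s$ being a cosine, which justifies the parenthetical estimates in $(\ref{okl})$. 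The symmetric expansion is obtained identically by instead factoring out $\lVert y\rVert^{p-1}$ and applying Lemma \ref{lm} with $x$ and $y$ interchanged. I expect the only genuine obstacle to be the bookkeeping in this last equivalence---verifying that squaring is reversible and that the root structure matches $(\ref{okl})$ exactly---while the derivation of $(\ref{kkk})$ itself is immediate.
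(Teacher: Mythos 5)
Your proposal is correct and takes essentially the same route as the paper's own proof: both factor out $\lVert x\rVert^{p-1}$, apply Lemma \ref{lm} with $t=-\lVert y\rVert^{p-1}/\lVert x\rVert^{p-1}$ (the sign being immaterial since the series involves only even powers), and reduce the admissibility condition $(\ref{hg})$ to a quadratic inequality whose solvability yields $\lVert y\rVert^{p}\leq\sqrt{2}\,\lVert x\rVert^{p}$ and whose root interval is exactly $(\ref{okl})$. Your normalization via $s=\langle x,y\rangle/(\lVert x\rVert\lVert y\rVert)$ and $r=\lVert x\rVert^{-p}\lVert y\rVert^{p}$ is merely a cosmetic rescaling of the paper's unnormalized quadratic in $\langle x,y\rangle$.
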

\begin{proof}
We have
\small\begin{equation*}
\alpha_p[x,y]=\left\lVert\lVert x\rVert^{p-1}x-\lVert y\rVert^{p-1}y\right\rVert
=\lVert x\rVert^{p-1}\bigg\lVert x-\frac{\lVert y\rVert^{p-1}}{\lVert x\rVert^{p-1}}y\bigg\rVert.
\end{equation*}
Taking $t=-\frac{\lVert y\rVert^{p-1}}{\lVert x\rVert^{p-1}}$ in Lemma \ref{lm}, we reach
\small\begin{equation*}
\alpha_p[x,y]=\lVert x\rVert^{p-1}\frac{\sqrt{\lVert x\rVert^{2}\lVert y\rVert^{2}-\langle x,y\rangle^{2}}}{\lVert y\rVert}\sum_{k=0}^{\infty}\binom{\frac{1}{2}}{k}\bigg(\frac{\lVert x\rVert^{1-p}\lVert y\rVert^{1+p}-\langle x,y\rangle}{\sqrt{\lVert x\rVert^{2}\lVert y\rVert^{2}-\langle x,y\rangle^{2}}}\bigg)^{2k},
\end{equation*}
provided that,
\begin{equation*}
\frac{-\langle x,y\rangle-\sqrt{\lVert x\rVert^{2}\lVert y\rVert^{2}-\langle x,y\rangle^{2}}}{\lVert y\rVert^{2}}\leq -\frac{\lVert y\rVert^{p-1}}{\lVert x\rVert^{p-1}}\leq\frac{-\langle x,y\rangle+\sqrt{\lVert x\rVert^{2}\lVert y\rVert^{2}-\langle x,y\rangle^{2}}}{\lVert y\rVert^{2}}.
\end{equation*}
But, this condition is in turn equivalent to
\begin{equation*}
\bigg\lvert\langle x,y\rangle-\lVert x\rVert^{1-p}\lVert y\rVert^{1+p}\bigg\rvert\leq\sqrt{\lVert x\rVert^{2}\lVert y\rVert^{2}-\langle x,y\rangle^{2}},
\end{equation*}
or
\begin{equation*}
2\langle x,y\rangle^{2}-2\lVert x\rVert^{1-p}\lVert y\rVert^{1+p}\langle x,y\rangle+\lVert x\rVert^{2}\lVert y\rVert^{2}(\lVert x\rVert^{-2p}\lVert y\rVert^{2p}-1)\leq0,
\end{equation*}
which is equivalent to $\lVert x\rVert^{2}\lVert y\rVert^{2}(2-\lVert x\rVert^{-2p}\lVert y\rVert^{2p})\geq0$ and $(\ref{okl})$.
\end{proof}
The following corollary shows that $\alpha[x,y]$ is completely expressible by $\lVert x\rVert$, $\lVert y\rVert$ and the errors of the Cauchy-Schwarz inequality; $\lVert x\rVert\lVert y\rVert\pm\langle x,y\rangle\geq0$.
\begin{corollary}
Let $\mathcal{X}$ be an inner product space. If $x$ and $y$ are linearly independent vectors of $\mathcal{X}$,
then
\small\begin{equation}\label{hf}
\alpha[x,y]=\frac{\sqrt{\lVert x\rVert^{2}\lVert y\rVert^{2}-\langle x,y\rangle^{2}}}{\lVert x\rVert\lVert y\rVert}\sum_{k=0}^{\infty}\binom{\frac{1}{2}}{k}\bigg(\frac{\lVert x\rVert\lVert y\rVert-\langle x,y\rangle}{\lVert x\rVert\lVert y\rVert+\langle x,y\rangle}\bigg)^{k},
\end{equation}
whenever $\langle x,y\rangle\geq 0$, and
\small\begin{equation}\label{fh}
\alpha[x,y]=\sqrt{4-\frac{\lVert x\rVert^{2}\lVert y\rVert^{2}-\langle x,y\rangle^{2}}{\lVert x\rVert^{2}\lVert y\rVert^{2}}\bigg[\sum_{k=0}^{\infty}\binom{\frac{1}{2}}{k}\bigg(\frac{\lVert x\rVert\lVert y\rVert+\langle x,y\rangle}{\lVert x\rVert\lVert y\rVert-\langle x,y\rangle}\bigg)^{k}\bigg]^{2}},
\end{equation}
whenever $\langle x,y\rangle<0$.
\end{corollary}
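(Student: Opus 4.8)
The plan is to derive both identities from the theorem preceding this corollary, namely equation $(\ref{kkk})$, specialized to $p=0$. The case $\langle x,y\rangle\geq 0$ follows directly from this specialization, whereas the case $\langle x,y\rangle<0$ is obtained from the first case by replacing $y$ with $-y$ and invoking the parallelogram law, since $\alpha[x,y]$ measures the distance between the unit vectors $x/\lVert x\rVert$ and $y/\lVert y\rVert$.

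First I would set $p=0$ in $(\ref{kkk})$. The prefactor becomes $\frac{\sqrt{\lVert x\rVert^{2}\lVert y\rVert^{2}-\langle x,y\rangle^{2}}}{\lVert x\rVert\lVert y\rVert}$, and the quantity raised to the $2k$-th power becomes $\frac{\lVert x\rVert\lVert y\rVert-\langle x,y\rangle}{\sqrt{\lVert x\rVert^{2}\lVert y\rVert^{2}-\langle x,y\rangle^{2}}}$. Using the factorization $\lVert x\rVert^{2}\lVert y\rVert^{2}-\langle x,y\rangle^{2}=(\lVert x\rVert\lVert y\rVert-\langle x,y\rangle)(\lVert x\rVert\lVert y\rVert+\langle x,y\rangle)$, the square of this base simplifies to $\frac{\lVert x\rVert\lVert y\rVert-\langle x,y\rangle}{\lVert x\rVert\lVert y\rVert+\langle x,y\rangle}$, so the exponent $2k$ collapses to $k$ and one recovers $(\ref{hf})$. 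It then remains to check that the hypotheses of $(\ref{kkk})$ hold at $p=0$: the condition $\lVert y\rVert^{p}\leq\sqrt{2}\lVert x\rVert^{p}$ reads $1\leq\sqrt{2}$, which is automatic, while the two-sided bound $(\ref{okl})$ collapses (since $\lVert x\rVert^{-p}\lVert y\rVert^{p}=1$ at $p=0$) to $0\leq\frac{\langle x,y\rangle}{\lVert x\rVert\lVert y\rVert}\leq 1$. Here the upper bound is the strict Cauchy--Schwarz inequality and the lower bound is exactly the hypothesis $\langle x,y\rangle\geq 0$; thus $(\ref{hf})$ is valid throughout this range.

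For the case $\langle x,y\rangle<0$, I would set $u=x/\lVert x\rVert$ and $v=y/\lVert y\rVert$, so that $\alpha[x,y]=\lVert u-v\rVert$ and $\alpha[x,-y]=\lVert u+v\rVert$. Since $u$ and $v$ are unit vectors, the parallelogram law yields $\lVert u-v\rVert^{2}+\lVert u+v\rVert^{2}=2\lVert u\rVert^{2}+2\lVert v\rVert^{2}=4$, and hence $\alpha[x,y]=\sqrt{4-\alpha[x,-y]^{2}}$. Because $\langle x,-y\rangle=-\langle x,y\rangle>0$, the already-established identity $(\ref{hf})$ applies to the pair $x,-y$; substituting $\lVert -y\rVert=\lVert y\rVert$ and $\langle x,-y\rangle=-\langle x,y\rangle$ (and noting $\langle x,-y\rangle^{2}=\langle x,y\rangle^{2}$) turns the base of the series into $\frac{\lVert x\rVert\lVert y\rVert+\langle x,y\rangle}{\lVert x\rVert\lVert y\rVert-\langle x,y\rangle}$. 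Squaring the resulting expression for $\alpha[x,-y]$ and inserting it into $\alpha[x,y]=\sqrt{4-\alpha[x,-y]^{2}}$ produces precisely $(\ref{fh})$.

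The only genuinely delicate point is the bookkeeping in the first step: confirming that the two-sided condition $(\ref{okl})$ degenerates cleanly to the single sign condition $\langle x,y\rangle\geq 0$, and that the exponent $2k$ truly reduces to $k$ after the factorization. Both become routine once the factorization of $\lVert x\rVert^{2}\lVert y\rVert^{2}-\langle x,y\rangle^{2}$ is spotted. The passage to the negative case is then immediate from the parallelogram identity, which is available precisely because we are working in an inner product space.
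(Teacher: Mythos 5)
Your proposal is correct and takes essentially the same route as the paper's proof: specialize $(\ref{kkk})$ to $p=0$ (where the factorization $\lVert x\rVert^{2}\lVert y\rVert^{2}-\langle x,y\rangle^{2}=(\lVert x\rVert\lVert y\rVert-\langle x,y\rangle)(\lVert x\rVert\lVert y\rVert+\langle x,y\rangle)$ collapses the exponent $2k$ to $k$) to get $(\ref{hf})$, then obtain $(\ref{fh})$ by applying $(\ref{hf})$ to the pair $x,-y$ together with $\alpha[x,y]=\sqrt{4-\alpha^{2}[x,-y]}$. You merely make explicit details the paper leaves tacit, namely the verification that $(\ref{okl})$ degenerates to $\langle x,y\rangle\geq 0$ at $p=0$ and the parallelogram-law derivation of the identity $\alpha[x,y]=\sqrt{4-\alpha^{2}[x,-y]}$ for unit vectors.
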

\begin{proof}
The equality $(\ref{hf})$ follows from $(\ref{kkk})$ by taking $p=0$. If $\langle x,y\rangle<0$, then $\langle x,-y\rangle> 0$, and so $(\ref{fh})$ follows from $(\ref{hf})$ and $\alpha[x,y]=\sqrt{4-\alpha^{2}[x,-y]}$.
\end{proof}

\section{Comparison of $p$-angular and $q$-angular distances}

In this section, we compare two quantities $\alpha_p$ with $\alpha_q$ for arbitrary $p,q\in\mathbb{R}$. There are several papers related to comparison of $\alpha_p$ with $\alpha_1$; see, e.g., \cite{D2}-\cite{D9}. The advantage of taking $p$ and $q$ arbitrary is that, whenever we find an inequality involving $\alpha_p$ and $\alpha_q$, we can obtain its reverse by changing the roles of $p$ and $q$ with each other, which is as sharp as the first one.

\subsection{Generalizations of Maligranda's results}

The following theorem is a generalization of Maligranda's inequalities \cite{M}.
\begin{theorem}\label{po}
Let $p,q\in\mathbb{R}$, $q\neq0$ and $x,y\in \mathcal{X}\smallsetminus{\lbrace 0\rbrace}$.
\begin{itemize}
\item [\rm{(i)}]~ If $\frac{p}{q}\geq 1$, then
\begin{align}\label{i1}
\frac{p}{2p-q}&\max(\lVert x\rVert^{p-q},\lVert y\rVert^{p-q})\alpha_q[x,y]\nonumber\\
&\leq\alpha_p[x,y]\nonumber\\
&\leq \frac{p}{q}\max(\lVert x\rVert^{p-q},\lVert y\rVert^{p-q})\alpha_q[x,y].
\end{align}
\item [\rm{(ii)}]~ If $0\leq\frac{p}{q}\leq 1$, then
\begin{align}\label{i2}
\frac{p}{q}&\cdot\frac{\alpha_q[x,y]}{\max(\lVert x\rVert^{q-p},\lVert y\rVert^{q-p})}\nonumber\\
&\leq\alpha_p[x,y]\nonumber\\
&\leq\frac{2q-p}{q}\cdot\frac{\alpha_q[x,y]}{\max(\lVert x\rVert^{q-p},\lVert y\rVert^{q-p})}.
\end{align}
\item [\rm{(iii)}]~ If $\frac{p}{q}\leq 0$, then
\begin{align}\label{i3}
\frac{p}{2p-q}&\cdot\frac{\max(\lVert x\rVert^{p},\lVert y\rVert^{p})}{\max(\lVert x\rVert^{q},\lVert y\rVert^{q})}\alpha_q[x,y]\nonumber\\
&\leq\alpha_p[x,y]\nonumber\\
&\leq\frac{2q-p}{q}\cdot\frac{\max(\lVert x\rVert^{p},\lVert y\rVert^{p})}{\max(\lVert x\rVert^{q},\lVert y\rVert^{q})}\alpha_q[x,y].
\end{align}
\end{itemize}
\end{theorem}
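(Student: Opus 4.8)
My plan is to obtain the four ``routine'' halves of (i) and (ii) directly from Lemma \ref{ppoo}, to prove one of the two remaining ``sharp'' halves by hand, and to recover everything else from the $p\leftrightarrow q$ symmetry noted at the start of the section. Throughout I set $a=\lVert x\rVert$, $b=\lVert y\rVert$ and, using $\alpha_r[x,y]=\alpha_r[y,x]$, assume without loss of generality that $a\geq b>0$. The first step is a purely arithmetic simplification of the two constants in \eqref{1}: each hypothesis fixes the sign of $p-q$, giving $|p-q|=|p|-|q|$ when $\frac{p}{q}\geq1$, $|p-q|=|q|-|p|$ when $0\leq\frac{p}{q}\leq1$, and $|p-q|=|p|+|q|$ when $\frac{p}{q}\leq0$. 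Feeding these into $\frac{|p|}{|p|+|p-q|}$ and $\frac{|q|+|p-q|}{|q|}$ collapses them to $\frac{p}{2p-q}$, $\frac{p}{q}$ and $\frac{2q-p}{q}$, respectively.

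With the constants identified, Lemma \ref{ppoo} already delivers the upper bound in (i) (it is the right inequality of \eqref{1}) and the lower bound in (ii) (the left inequality of \eqref{1}, after noting $\frac{1}{\max(\lVert x\rVert^{q-p},\lVert y\rVert^{q-p})}=\min(\lVert x\rVert^{p-q},\lVert y\rVert^{p-q})$). What is \emph{not} visible from Lemma \ref{ppoo} is that in the remaining half the crude factor $\min(\lVert x\rVert^{p-q},\lVert y\rVert^{p-q})$ can be sharpened to $\max(\lVert x\rVert^{p-q},\lVert y\rVert^{p-q})$ (equivalently, a $\max$ improved to a $\min$); this is where the real work lies.

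I would prove one representative sharp half --- say the upper bound in (ii) --- from scratch. Beginning with the identity $\lVert x\rVert^{p-1}x-\lVert y\rVert^{p-1}y=\lVert x\rVert^{p-q}\big(\lVert x\rVert^{q-1}x-\lVert y\rVert^{q-1}y\big)+\big(\lVert x\rVert^{p-q}-\lVert y\rVert^{p-q}\big)\lVert y\rVert^{q-1}y$, the triangle inequality gives $\alpha_p[x,y]\leq a^{p-q}\alpha_q[x,y]+b^{q}\big|a^{p-q}-b^{p-q}\big|$. Using the reverse triangle inequality $\big|a^{q}-b^{q}\big|\leq\alpha_q[x,y]$ to replace the difference of norms, then (in the representative sub-case $q>0$, where the target factor is $a^{p-q}$) dividing by $a^{p-q}\alpha_q[x,y]$ and substituting $s=(b/a)^{q}$ and $\rho=\frac{p}{q}\in[0,1]$, the claim reduces to the tangent-line inequality $s^{\rho}\leq1+\rho(s-1)$, valid for all $s>0$ because $t\mapsto t^{\rho}$ is concave for $0\leq\rho\leq1$. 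This concavity (Bernoulli's inequality) is exactly the ingredient that upgrades the $\min$ to a $\max$ and produces the constant $\frac{2q-p}{q}$.

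Finally I invoke the interchange of $p$ and $q$: any inequality $\alpha_P[x,y]\leq C\,\alpha_Q[x,y]$ for a pair $(P,Q)$ rearranges into a lower bound for $\alpha_Q[x,y]$ against $\alpha_P[x,y]$. Since $\frac{p}{q}\geq1\Leftrightarrow0\leq\frac{q}{p}\leq1$, applying the sharp upper bound of (ii) to the pair $(q,p)$ yields precisely the sharp lower bound of (i). Case (iii) is self-dual under the swap ($\frac{p}{q}\leq0\Leftrightarrow\frac{q}{p}\leq0$), so its two halves are exchanged by the interchange and both follow from a single direct estimate of the same flavour, the relevant power inequality being $1-r^{q-p}\leq\frac{q-p}{q}(1-r^{q})$ for $r\in(0,1]$, which a one-line monotonicity argument settles. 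I expect the main obstacle to be neither of these inequalities individually but the bookkeeping: tracking which of $a,b$ carries the $\max$ (and which the $\min$) as the sign of $p-q$ varies inside a single case, together with the degenerate value $p=0$ in (ii) and (iii), where the swap must be replaced by a direct check.
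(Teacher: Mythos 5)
Your proposal is correct, and its engine is the same as the paper's: a triangle-inequality splitting of $\lVert x\rVert^{p-1}x-\lVert y\rVert^{p-1}y$, the reverse triangle inequality $\left\lvert\lVert x\rVert^{q}-\lVert y\rVert^{q}\right\rvert\leq\alpha_q[x,y]$, a scalar power estimate, and the $p\leftrightarrow q$ interchange to convert each right inequality into the opposite left one. The organization, however, is genuinely different and leaner. The paper proves all three right-hand inequalities directly — in particular the upper bound in \eqref{i1} is established from scratch, with a sub-case split $\frac{p}{q}\geq2$ versus $1\leq\frac{p}{q}\leq2$ and endpoint bounds on $\int_{\lVert x\rVert^{q}}^{\lVert y\rVert^{q}}t^{\frac{p-2q}{q}}\,dt$ — whereas you import that bound, and the lower bound in \eqref{i2}, directly from Lemma \ref{ppoo}: your constant bookkeeping ($\lvert p-q\rvert=\lvert p\rvert-\lvert q\rvert$, $\lvert q\rvert-\lvert p\rvert$, $\lvert p\rvert+\lvert q\rvert$ in the three regimes) is right, and the paper itself confirms this identification, though only in a remark after the proof, without exploiting it. That leaves you a single substantive new estimate (the upper bound in \eqref{i2}) plus case (iii), against the paper's three direct arguments. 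Your reduction of the \eqref{i2} upper bound to $s^{\rho}\leq1+\rho(s-1)$ for $s=(b/a)^{q}$, $\rho=\frac{p}{q}\in[0,1]$ is exactly equivalent to the paper's integral comparison $\lVert y\rVert^{q-p}-\lVert x\rVert^{q-p}\leq\frac{q-p}{q}\lVert x\rVert^{-p}(\lVert y\rVert^{q}-\lVert x\rVert^{q})$, just phrased via concavity instead of a monotone integrand; and your case (iii) inequality $1-r^{q-p}\leq\frac{q-p}{q}(1-r^{q})$ on $(0,1]$ checks out (the difference has derivative $(q-p)r^{q-1}(r^{-p}-1)\leq0$ there when $\frac{p}{q}\leq0$), with the sub-case $q<0<p$ mapping onto the same inequality under $(p,q)\mapsto(-p,-q)$, as does the self-duality of (iii) under the swap. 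Two bookkeeping points you flagged do resolve as expected: your normalization $\lVert x\rVert\geq\lVert y\rVert$ makes the choice of decomposition depend on the sign of $q$ (the paper's normalization $\lVert x\rVert^{q}\leq\lVert y\rVert^{q}$ sidesteps this), and at $p=0$ the swap-derived lower bounds degenerate to the trivial $0\leq\alpha_p[x,y]$, so the direct check is immediate. Net comparison: your route trades self-containment for economy — it eliminates the paper's sub-case analysis in (i) entirely by recycling Lemma \ref{ppoo} — while the paper's uniform integral method keeps the proof of Theorem \ref{po} independent of the sharpness observations made afterwards.
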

\begin{proof}
The left inequalities are obtained from right ones by interchanging the roles of $p$ and $q$. So, it is sufficient to prove only the right inequalities.
Without loss of generality we may assume that $\lVert x\rVert^{q}\leq\lVert y\rVert^{q}$. By the triangle inequality, we have
\small\begin{align*}
\alpha_p[x,y]&\leq
\left\lVert\lVert x\rVert^{p-q}\lVert x\rVert^{q-1}x-\lVert y\rVert^{p-q}\lVert x\rVert^{q-1}x\right\rVert\\
&+\left\lVert\lVert y\rVert^{p-q}\lVert x\rVert^{q-1}x-\lVert y\rVert^{p-q}\lVert y\rVert^{q-1}y\right\rVert\\
&=\lVert x\rVert^{q}\left\lvert\lVert x\rVert^{p-q}-\lVert y\rVert^{p-q}\right\rvert+\lVert y\rVert^{p-q}\alpha_q[x,y].
\end{align*}
$\quad(i)$ Let $\frac{p}{q}\geq1$. Since $\frac{p-q}{q}\geq0$, we have $\lVert x\rVert^{p-q}\leq\lVert y\rVert^{p-q}$, and so
\begin{equation*}
\alpha_p[x,y]\leq\lVert x\rVert^{q}(\lVert y\rVert^{p-q}-\lVert x\rVert^{p-q})+\lVert y\rVert^{p-q}\alpha_q[x,y].
\end{equation*}
If $\frac{p}{q}\geq2$, then since $\frac{p-2q}{q}\geq0$, we get
\begin{equation*}
\lVert y\rVert^{p-q}-\lVert x\rVert^{p-q}=\frac{p-q}{q}\int_{\lVert x\rVert^{q}}^{\lVert y\rVert^{q}}t^{\frac{p-2q}{q}}dt\leq\frac{p-q}{q}\lVert y\rVert^{p-2q}(\lVert y\rVert^{q}-\lVert x\rVert^{q}),
\end{equation*}
which leads to
\begin{align*}
\lVert x\rVert^{q}(\lVert y\rVert^{p-q}-\lVert x\rVert^{p-q})&\leq\frac{p-q}{q}\lVert x\rVert^{q}\lVert y\rVert^{p-2q}(\lVert y\rVert^{q}-\lVert x\rVert^{q})\\
&\leq\frac{p-q}{q}\lVert y\rVert^{p-q}\alpha_q[x,y].
\end{align*}
Whence
\begin{equation*}
\alpha_p[x,y]\leq\frac{p}{q}\lVert y\rVert^{p-q}\alpha_q[x,y].
\end{equation*}
If $1\leq\frac{p}{q}\leq 2$, it follows from $\frac{p-2q}{q}\leq0$ that
\begin{equation*}
\lVert y\rVert^{p-q}-\lVert x\rVert^{p-q}=\frac{p-q}{q}\int_{\lVert x\rVert^{q}}^{\lVert y\rVert^{q}}t^{\frac{p-2q}{q}}dt\leq\frac{p-q}{q}\lVert x\rVert^{p-2q}(\lVert y\rVert^{q}-\lVert x\rVert^{q}),
\end{equation*}
which gives
\begin{equation*}
\lVert x\rVert^{q}(\lVert y\rVert^{p-q}-\lVert x\rVert^{p-q})\leq\frac{p-q}{q}\lVert x\rVert^{p-q}(\lVert y\rVert^{q}-\lVert x\rVert^{q})
\leq\frac{p-q}{q}\lVert y\rVert^{p-q}\alpha_q[x,y],
\end{equation*}
and again
\begin{equation*}
\alpha_p[x,y]\leq\frac{p}{q}\lVert y\rVert^{p-q}\alpha_q[x,y].
\end{equation*}
$\quad(ii)$ Let $0\leq\frac{p}{q}\leq 1$. The inequality $\frac{q-p}{q}\geq0$ yields that $\lVert x\rVert^{q-p}\leq\lVert y\rVert^{q-p}$, and so
\begin{equation*}
\lVert x\rVert^{q}\left\lvert\lVert x\rVert^{p-q}-\lVert y\rVert^{p-q}\right\rvert=\lVert x\rVert^{q}\frac{\lVert y\rVert^{q-p}-\lVert x\rVert^{q-p}}{\lVert x\rVert^{q-p}\lVert y\rVert^{q-p}}.
\end{equation*}
It follows from
\begin{equation*}
\lVert y\rVert^{q-p}-\lVert x\rVert^{q-p}=\frac{q-p}{q}\int_{\lVert x\rVert^{q}}^{\lVert y\rVert^{q}}t^{-\frac{p}{q}}dt\leq\frac{q-p}{q}\lVert x\rVert^{-p}(\lVert y\rVert^{q}-\lVert x\rVert^{q}),
\end{equation*}
that
\begin{equation*}
\lVert x\rVert^{q}\frac{\lVert y\rVert^{q-p}-\lVert x\rVert^{q-p}}{\lVert x\rVert^{q-p}\lVert y\rVert^{q-p}}\leq\frac{q-p}{q}\cdot\frac{\lVert y\rVert^{q}-\lVert x\rVert^{q}}{\lVert y\rVert^{q-p}}.
\end{equation*}
Hence
\begin{equation*}
\alpha_p[x,y]\leq\frac{q-p}{q}\cdot\frac{\lVert y\rVert^{q}-\lVert x\rVert^{q}}{\lVert y\rVert^{q-p}}+\frac{\alpha_q[x,y]}{\lVert y\rVert^{q-p}}
\leq\Big(2-\frac{p}{q}\Big)\frac{\alpha_q[x,y]}{\lVert y\rVert^{q-p}}.
\end{equation*}
$\quad(iii)$ The same reasoning as in the proof of $(ii)$ yields $(iii)$.
\end{proof}
Now, taking $q=1$ in Theorem \ref{po}, we obtain the following corollary in which the right inequalities are due to Maligranda \cite{M} and left ones are new suitable reverses to them.
\begin{corollary}
Let $x,y\in \mathcal{X}\smallsetminus{\lbrace 0\rbrace}$.
\begin{itemize}
\item [\rm{(i)}]~ If $p\geq 1$, then
\small\begin{equation*}
\frac{p}{2p-1}\max(\lVert x\rVert,\lVert y\rVert)^{p-1}\lVert x-y\rVert\leq\alpha_p[x,y]\leq p\max(\lVert x\rVert,\lVert y\rVert)^{p-1}\lVert x-y\rVert.
\end{equation*}
\item [\rm{(ii)}]~ If $0\leq p\leq 1$, then
\small\begin{equation*}
\frac{p\lVert x-y\rVert}{\max(\lVert x\rVert,\lVert y\rVert)^{1-p}}\leq\alpha_p[x,y]\leq\frac{(2-p)\lVert x-y\rVert}{\max(\lVert x\rVert,\lVert y\rVert)^{1-p}}.
\end{equation*}
\item [\rm{(iii)}]~ If $p\leq 0$, then
\small\begin{equation*}
\frac{p}{2p-1}\cdot\frac{\max(\lVert x\rVert^{p},\lVert y\rVert^{p})}{\max(\lVert x\rVert,\lVert y\rVert)}\lVert x-y\rVert\leq\alpha_p[x,y]\leq(2-p)\frac{\max(\lVert x\rVert^{p},\lVert y\rVert^{p})}{\max(\lVert x\rVert,\lVert y\rVert)}\lVert x-y\rVert.
\end{equation*}
\end{itemize}
\end{corollary}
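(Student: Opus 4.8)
The plan is to obtain all three inequalities directly from Theorem \ref{po} by specializing to $q=1$, using $\alpha_1[x,y]=\lVert x-y\rVert$. With $q=1$ the three hypotheses $\frac pq\ge 1$, $0\le\frac pq\le 1$, and $\frac pq\le 0$ of Theorem \ref{po} collapse precisely to the hypotheses $p\ge 1$, $0\le p\le 1$, and $p\le 0$ displayed in the corollary, so no case analysis needs to be redone; the entire task is to substitute $q=1$ and then tidy up the constants and the norm factors.

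First I would record the constants. Setting $q=1$ turns $\frac{p}{2p-q}$ into $\frac{p}{2p-1}$, turns $\frac pq$ into $p$, and turns $\frac{2q-p}{q}$ into $2-p$, which are exactly the coefficients appearing in the three displays. Next I would simplify the norm factors. In (i) the factor $\max(\lVert x\rVert^{p-q},\lVert y\rVert^{p-q})=\max(\lVert x\rVert^{p-1},\lVert y\rVert^{p-1})$, and since $p\ge 1$ gives $p-1\ge 0$, the increasing map $t\mapsto t^{p-1}$ on $(0,\infty)$ lets me rewrite this as $\max(\lVert x\rVert,\lVert y\rVert)^{p-1}$. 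In (ii) the denominator $\max(\lVert x\rVert^{q-p},\lVert y\rVert^{q-p})=\max(\lVert x\rVert^{1-p},\lVert y\rVert^{1-p})$, and because $0\le p\le 1$ forces $1-p\ge 0$, the same monotonicity identity rewrites it as $\max(\lVert x\rVert,\lVert y\rVert)^{1-p}$. In (iii) the denominator is simply $\max(\lVert x\rVert^{q},\lVert y\rVert^{q})=\max(\lVert x\rVert,\lVert y\rVert)$, while the numerator $\max(\lVert x\rVert^{p},\lVert y\rVert^{p})$ is kept as it stands.

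The only point requiring attention, and the reason the three cases are presented in different shapes, is exactly this rewriting of a maximum of powers as a power of a maximum: it is valid only when the exponent is nonnegative. In (i) and (ii) the relevant exponents $p-1$ and $1-p$ are nonnegative under the stated hypotheses, so the compact form $\max(\lVert x\rVert,\lVert y\rVert)^{\,\cdot}$ is justified; in (iii), where $p\le 0$ and hence $t\mapsto t^{p}$ is decreasing, the exponent cannot be pulled outside the maximum, which is precisely why the numerator must retain the form $\max(\lVert x\rVert^{p},\lVert y\rVert^{p})$. Once these substitutions are made, the three displayed inequalities follow verbatim from (i), (ii), and (iii) of Theorem \ref{po}, so no genuine obstacle arises beyond bookkeeping with this monotonicity observation.
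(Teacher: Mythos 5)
Your proposal is correct and is exactly the paper's route: the paper derives this corollary by taking $q=1$ in Theorem \ref{po} and simplifying the constants $\frac{p}{2p-q}$, $\frac{p}{q}$, $\frac{2q-p}{q}$ to $\frac{p}{2p-1}$, $p$, $2-p$. Your additional observation that $\max(\lVert x\rVert^{r},\lVert y\rVert^{r})=\max(\lVert x\rVert,\lVert y\rVert)^{r}$ requires $r\geq 0$ — which holds for $r=p-1$ in (i) and $r=1-p$ in (ii) but fails for $r=p\leq 0$ in (iii) — is a valid and worthwhile justification of the bookkeeping the paper leaves implicit.
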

\begin{corollary}
Let $p\neq 2$ and $x,y\in \mathcal{X}\smallsetminus{\lbrace 0\rbrace}$.
\begin{itemize}
\item [\rm{(i)}]~ If $\frac{p}{2-p}\geq 1$, then
\begin{align*}
\frac{p}{3p-2}&\max(\lVert x\rVert^{p-1}\lVert y\rVert^{1-p},\lVert y\rVert^{p-1}\lVert x\rVert^{1-p})\beta_p[x,y]\\
&\leq\alpha_p[x,y]\\
&\leq\frac{p}{2-p}\max(\lVert x\rVert^{p-1}\lVert y\rVert^{1-p},\lVert y\rVert^{p-1}\lVert x\rVert^{1-p})\beta_p[x,y].
\end{align*}
\item [\rm{(ii)}]~ If $0\leq\frac{p}{2-p}\leq 1$, then
\begin{align*}
\frac{p}{(2-p)}&\cdot\frac{\beta_p[x,y]}{\max(\lVert x\rVert^{p-1}\lVert y\rVert^{1-p},\lVert y\rVert^{p-1}\lVert x\rVert^{1-p})}\\
&\leq\alpha_p[x,y]\\
&\leq\frac{4-3p}{2-p}\cdot\frac{\beta_p[x,y]}{\max(\lVert x\rVert^{p-1}\lVert y\rVert^{1-p},\lVert y\rVert^{p-1}\lVert x\rVert^{1-p})}.
\end{align*}
\item [\rm{(iii)}]~If $\frac{p}{2-p}\leq 0$, then
\begin{align*}
\frac{p}{3p-2}&\cdot\frac{\max(\lVert x\rVert^{p},\lVert y\rVert^{p})}{\max(\lVert x\rVert\lVert y\rVert^{p-1},\lVert y\rVert\lVert x\rVert^{p-1})}\beta_p[x,y]\\
&\leq\alpha_p[x,y]\\
&\leq\frac{4-3p}{2-p}\cdot\frac{\max(\lVert x\rVert^{p},\lVert y\rVert^{p})}{\max(\lVert x\rVert\lVert y\rVert^{p-1},\lVert y\rVert\lVert x\rVert^{p-1})}\beta_p[x,y].
\end{align*}
\end{itemize}
In particular, for $p=0$ and $q=1$, it follows from $(ii)$ that
\begin{equation*}
\alpha[x,y]\leq2\min\left\{\frac{\lVert x\rVert}{\lVert y\rVert},\frac{\lVert y\rVert}{\lVert x\rVert}\right\}\beta[x,y].
\end{equation*}
\end{corollary}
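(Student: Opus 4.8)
The plan is to obtain the entire corollary as a specialization of Theorem \ref{po} to the choice $q = 2-p$, followed by converting $\alpha_{2-p}$ back into $\beta_p$ by means of the identity $(\ref{pp})$. Indeed, $(\ref{pp})$ gives $\alpha_{2-p}[x,y] = \beta_p[x,y]/(\lVert x\rVert^{p-1}\lVert y\rVert^{p-1})$, and with $q = 2-p$ the hypothesis $q\neq 0$ of Theorem \ref{po} becomes precisely $p\neq 2$, while the three trichotomy conditions $\frac{p}{q}\geq 1$, $0\leq\frac{p}{q}\leq 1$, $\frac{p}{q}\leq 0$ become verbatim the cases $\frac{p}{2-p}\geq 1$, $0\leq\frac{p}{2-p}\leq 1$, $\frac{p}{2-p}\leq 0$ of the corollary. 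So each part (i)--(iii) of the corollary is the image of the corresponding part of Theorem \ref{po} under this substitution.

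First I would substitute $q=2-p$ into the numerical coefficients of $(\ref{i1})$--$(\ref{i3})$ and check that they reduce to those claimed: $2p-q = 3p-2$, $\frac{p}{q}=\frac{p}{2-p}$, and $2q-p = 4-3p$. These are immediate and produce the factors $\frac{p}{3p-2}$, $\frac{p}{2-p}$, and $\frac{4-3p}{2-p}$ in the respective inequalities.

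The only genuine computation is the reduction of the norm factors, and this is the step to handle with care. With $q=2-p$ one has $p-q = 2(p-1)$ and $q-p = 2(1-p)$. Using $\max(a,b)/c = \max(a/c,b/c)$ for $c>0$ and $1/\min(a,b)=\max(1/a,1/b)$ for positive $a,b$, I would verify the identities
$$\frac{\max(\lVert x\rVert^{p-q},\lVert y\rVert^{p-q})}{\lVert x\rVert^{p-1}\lVert y\rVert^{p-1}} = \max(\lVert x\rVert^{p-1}\lVert y\rVert^{1-p},\lVert y\rVert^{p-1}\lVert x\rVert^{1-p})$$
and, for part (iii),
$$\max(\lVert x\rVert^{q},\lVert y\rVert^{q})\,\lVert x\rVert^{p-1}\lVert y\rVert^{p-1} = \max(\lVert x\rVert\lVert y\rVert^{p-1},\lVert y\rVert\lVert x\rVert^{p-1}),$$
together with the analogous simplification of the denominator $\max(\lVert x\rVert^{q-p},\lVert y\rVert^{q-p})\,\lVert x\rVert^{p-1}\lVert y\rVert^{p-1}$ in part (ii). Each is a one-line exponent count after distributing the factor $\lVert x\rVert^{p-1}\lVert y\rVert^{p-1}$ through the maximum. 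Substituting these back, and replacing $\alpha_{2-p}[x,y]$ by $\beta_p[x,y]/(\lVert x\rVert^{p-1}\lVert y\rVert^{p-1})$, turns each of $(\ref{i1})$--$(\ref{i3})$ into the corresponding inequality of the corollary.

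Finally, for the displayed special case I would set $p=0$, which falls into part (ii) since $\frac{p}{2-p}=0\in[0,1]$. There the coefficient $\frac{4-3p}{2-p}$ equals $2$, and the factor $\max(\lVert x\rVert^{p-1}\lVert y\rVert^{1-p},\lVert y\rVert^{p-1}\lVert x\rVert^{1-p})$ becomes $\max(\lVert y\rVert/\lVert x\rVert,\lVert x\rVert/\lVert y\rVert)$; since $1/\max(u,1/u)=\min(u,1/u)$, the upper bound collapses to $2\min\{\lVert x\rVert/\lVert y\rVert,\lVert y\rVert/\lVert x\rVert\}\,\beta[x,y]$, as asserted. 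No step presents a real obstacle; the one thing to get exactly right is the exponent bookkeeping in the norm-factor identities above.
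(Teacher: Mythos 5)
Your proposal is correct and is exactly the paper's own proof, which reads in full: ``Take $q=2-p$ in Theorem \ref{po} and consider $(\ref{pp})$''; you have simply written out the coefficient substitutions ($2p-q=3p-2$, $2q-p=4-3p$) and the exponent bookkeeping ($p-q=2(p-1)$, $q-p=2(1-p)$) that the paper leaves implicit, and all of these check out, including the $p=0$ specialization via $1/\max(u,1/u)=\min(u,1/u)$.
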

\begin{proof}
Take $q=2-p$ in Theorem \ref{po} and consider $(\ref{pp})$.
\end{proof}
\begin{remark}
In $(\ref{i1}),(\ref{i2})$ and $(\ref{i3})$, the constants $2-\frac{p}{q}$ and $\frac{p}{q}$ in the right  inequalities are best possible. In fact, consider $\mathcal{X}=\mathbb{R}^{2}$ with the norm of $x=(x_1,x_2)$ given by $\lVert x\rVert=\lvert x_1\rvert+\lvert x_2\rvert$. Take $x=(1+\epsilon)^{\frac{1-q}{q}}(1,\epsilon)$ and $y=(1,0)$, where $\epsilon>0$ is small. If $\frac{p}{q}\geq1$, then
\begin{equation*}
\frac{\alpha_p[x,y]}{\alpha_q[x,y]\max(\lVert x\rVert^{p-q},\lVert y\rVert^{p-q})}=\frac{(1+\epsilon)^{\frac{p}{q}-1}-1}{\epsilon}\cdot\frac{1}{(1+\epsilon)^{\frac{p}{q}-1}}+1\rightarrow\frac{p}{q}
\end{equation*}
as $\epsilon\rightarrow 0^{+}$. In the case $0\leq\frac{p}{q}\leq1$, we obtain
\begin{equation*}
\frac{\alpha_p[x,y]}{\alpha_q[x,y]}\max(\lVert x\rVert^{q-p},\lVert y\rVert^{q-p})=\frac{(1+\epsilon)^{1-\frac{p}{q}}-1}{\epsilon}+1\rightarrow 2-\frac{p}{q}
\end{equation*}
as $\epsilon\rightarrow 0^{+}$. 
\par In the case when $\frac{p}{q}\leq0$, the best possibility of the constant $2-\frac{p}{q}$   in the right inequality of $(\ref{i3})$ is similarly verified. 
The best possibility of constants $\frac{p}{2p-q}$ and $\frac{p}{q}$ in the left inequalities of $(\ref{i1}),(\ref{i2})$ and $(\ref{i3})$ are obtained from the best possibility of constants $2-\frac{p}{q}$ and $\frac{p}{q}$ in the right hand sides of these inequalities by changing the roles of $p$ and $q$.
\end{remark}
\begin{remark}
Let $p,q\in\mathbb{R}$ and $q\neq0$. It is easily seen that in the case where $\frac{p}{q}\geq1$ (resp. $0\leq\frac{p}{q}\leq1$), the right (resp. left) hand side of inequality $(\ref{i1})$ (resp. $(\ref{i2})$) is as the same as the right (resp. left) hand side of inequality $(\ref{1})$, but the left (resp. right) hand side of inequality $(\ref{i1})$ (resp. $(\ref{i2})$) gives better estimate than the left (resp. right) hand side of inequality $(\ref{1})$. In the case when $\frac{p}{q}\leq 0$, using the fact that
\begin{equation*}
\min\Big(\frac{a}{c},\frac{b}{d}\Big)\leq\frac{\max(a,b)}{\max(c,d)}\leq\max\Big(\frac{a}{c},\frac{b}{d}\Big)\qquad(a,b,c,d>0),
\end{equation*}
both sides of $(\ref{i3})$ are better estimates than both corresponding sides of $(\ref{1})$.
\end{remark}

\subsection{Generalization of Dragomir's results}

The following theorem yields the result of Dragomir in \cite{D2}, if we take $q=1$.
\begin{theorem}
Let $x,y\in \mathcal{X}\smallsetminus{\lbrace 0\rbrace}$, $p,q\in\mathbb{R}$ and $q\neq0$.
\begin{itemize}
\item [\rm{(i)}]~ If $\frac{p}{q}\geq 1$, then
\begin{equation}\label{nn}
\alpha_p[x,y]\leq\frac{p}{q}~\alpha_q[x,y]\int_{0}^{1}\left\lVert(1-t)\lVert x\rVert^{q-1}x+t\lVert y\rVert^{q-1}y\right\rVert^{\frac{p}{q}-1}dt.
\end{equation}
\item [\rm{(ii)}]~ If $\frac{p}{q}<1$ and $x,y$ are linearly independent, then
\small{\begin{equation}\label{mm}
\alpha_p[x,y]\leq\frac{2q-p}{q}~\alpha_q[x,y]\int_{0}^{1}\left\lVert(1-t)\lVert x\rVert^{q-1}x+t\lVert y\rVert^{q-1}y\right\rVert^{\frac{p}{q}-1}dt.
\end{equation}}
\end{itemize}
\end{theorem}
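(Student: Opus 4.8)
The plan is to realize $\alpha_p[x,y]$ as the displacement of a single $\mathcal{X}$-valued path and to integrate along it. Put $w(t)=(1-t)\lVert x\rVert^{q-1}x+t\lVert y\rVert^{q-1}y$ for $t\in[0,1]$, so that $w(t)$ traces the segment joining $\lVert x\rVert^{q-1}x$ and $\lVert y\rVert^{q-1}y$, and set $F(t)=\lVert w(t)\rVert^{\frac{p}{q}-1}w(t)$. Computing the endpoints gives $\lVert w(0)\rVert=\lVert x\rVert^{q}$, hence $F(0)=\lVert x\rVert^{p-q}\lVert x\rVert^{q-1}x=\lVert x\rVert^{p-1}x$, and likewise $F(1)=\lVert y\rVert^{p-1}y$, so that $\lVert F(1)-F(0)\rVert=\alpha_p[x,y]$; moreover $w'(t)=\lVert y\rVert^{q-1}y-\lVert x\rVert^{q-1}x$ is constant in $t$ with $\lVert w'(t)\rVert=\alpha_q[x,y]$. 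By Hahn--Banach I would pick a linear functional $\varphi$ on $\mathcal{X}$ with $\lVert\varphi\rVert=1$ and $\varphi(F(1)-F(0))=\alpha_p[x,y]$, and consider the scalar function $g(t)=\varphi(F(t))=\lVert w(t)\rVert^{\frac{p}{q}-1}\varphi(w(t))$. Once $g$ is known to be absolutely continuous on $[0,1]$,
\[
\alpha_p[x,y]=g(1)-g(0)=\int_{0}^{1}g'(t)\,dt\le\int_{0}^{1}|g'(t)|\,dt .
\]

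The second step is the pointwise bound on $g'$. Writing $n(t)=\lVert w(t)\rVert$ and $\ell(t)=\varphi(w(t))$, we have $\ell'(t)=\varphi(w'(t))$, so that $|\ell(t)|\le n(t)$ and $|\ell'(t)|\le\lVert w'(t)\rVert=\alpha_q[x,y]$; and, since $t\mapsto n(t)$ is Lipschitz with constant $\lVert w'\rVert=\alpha_q[x,y]$ (the norm being $1$-Lipschitz and $w$ affine), we get $|n'(t)|\le\alpha_q[x,y]$ wherever $n'$ exists. Differentiating $g(t)=n(t)^{\frac{p}{q}-1}\ell(t)$ and inserting these three estimates yields
\[
|g'(t)|\le\Big(\big|\tfrac{p}{q}-1\big|+1\Big)\,n(t)^{\frac{p}{q}-1}\,\alpha_q[x,y].
\]
Since $\big|\tfrac{p}{q}-1\big|+1$ equals $\tfrac{p}{q}$ when $\tfrac{p}{q}\ge1$ and equals $\tfrac{2q-p}{q}$ when $\tfrac{p}{q}<1$, substituting into the displayed integral gives exactly $(\ref{nn})$ and $(\ref{mm})$, with integrand $n(t)^{\frac{p}{q}-1}=\lVert(1-t)\lVert x\rVert^{q-1}x+t\lVert y\rVert^{q-1}y\rVert^{\frac{p}{q}-1}$.

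The main obstacle is the regularity needed to justify the fundamental theorem of calculus for $g$. Because $t\mapsto n(t)$ is the composition of a norm with an affine map it is convex, hence locally Lipschitz and differentiable off a countable set, which legitimizes the a.e.\ bound on $n'$; and $\ell$ is affine. The delicate point is whether $n(t)^{\frac{p}{q}-1}$ stays absolutely continuous across a possible zero of $n$. In case (i), $\tfrac{p}{q}-1\ge0$, so the integrand is continuous and the exponent is large enough that $g$ remains absolutely continuous even if the segment passes through $0$, and no extra hypothesis is needed. In case (ii), $\tfrac{p}{q}-1<0$, and a zero of $n$ would produce a non-integrable singularity in the integrand; this is precisely why linear independence of $x,y$ is imposed. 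Indeed, $\lVert x\rVert^{q-1}x$ and $\lVert y\rVert^{q-1}y$ are then linearly independent, so $w(t)\neq0$ for every $t\in[0,1]$, whence $\inf_{[0,1]}n(t)>0$ by compactness; the integrand is then bounded and continuous, $F$ is Lipschitz, and $g$ is absolutely continuous, so the argument closes. The endpoint identifications and the three elementary estimates are routine; establishing absolute continuity of $g$ (and checking the harmless behaviour at a zero of $n$ in case (i)) is where the genuine care lies.
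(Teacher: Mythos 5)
Your proof is correct and follows essentially the same route as the paper: you parametrize the segment $w(t)=(1-t)\lVert x\rVert^{q-1}x+t\lVert y\rVert^{q-1}y$, differentiate $\lVert w(t)\rVert^{\frac{p}{q}-1}w(t)$ along it, and bound the derivative by $\bigl(\bigl\lvert\tfrac{p}{q}-1\bigr\rvert+1\bigr)\lVert w(t)\rVert^{\frac{p}{q}-1}\alpha_q[x,y]$, which is exactly the paper's computation with $g=fh$, including the use of convexity of $t\mapsto\lVert w(t)\rVert$ to get $\lvert n'(t)\rvert\le\alpha_q[x,y]$ almost everywhere. The only deviation is technical rather than conceptual---you scalarize via a Hahn--Banach norming functional and apply the scalar fundamental theorem of calculus, where the paper invokes the norm inequality for the vector-valued integral $\int_0^1 g'(t)\,dt$---and your treatment of absolute continuity across a possible zero of $\lVert w\rVert$ in case (i) is, if anything, more careful than the paper's brief remark that the argument also covers linearly dependent $x,y$ there.
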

\begin{proof}
We suppose that $x,y$ are linearly independent and prove $(\ref{nn})$ and $(\ref{mm})$ by one strike. As one can observe, this proof works also in the case when $\frac{p}{q}\geq 1$ and $x,y$ are linearly dependent. The function
$f:[0,1]\rightarrow[0,\infty)$ given by
\begin{equation*}
f(t)=\left\lVert(1-t)\lVert x\rVert^{q-1}x+ t\lVert y\rVert^{q-1}y\right\rVert^{\frac{p}{q}-1},
\end{equation*}
and the vector-valued function $h:[0,1]\rightarrow \mathcal{X}$ given by
\begin{equation*}
h(t)=\big[(1-t)\lVert x\rVert^{q-1}x+t\lVert y\rVert^{q-1}y\big],
\end{equation*}
are both absolutely continuous on
$[0,1]$.  Therefore, the function $g:[0,1]\rightarrow \mathcal{X}$ given by $g(t)=f(t)h(t)$ is absolutely continuous. The function $k(t):=\left\lVert(1-t)\lVert x\rVert^{q-1}x+ t\lVert y\rVert^{q-1}y\right\rVert$ is convex, and so except than at most a countable number of points, $k'(t)$ exists. It is easily verified that $\lvert k'(t)\rvert\leq\alpha_q[x,y]$ in each differentiability point $t$. We have
\begin{align*}
g'(t)&=f'(t)h(t)+f(t)h'(t)\\
&=\Big(\frac{p}{q}-1\Big)k(t)^{\frac{p}{q}-2}k'(t)h(t)
+f(t)\big[\lVert y\rVert^{q-1}y-\lVert x\rVert^{q-1}x\big]
\end{align*}
for almost all $t\in[0,1]$. Thus,
\small\begin{equation*}
\lVert g'(t)\rVert\leq\bigg(\bigg\lvert\frac{p}{q}-1\bigg\rvert+1\bigg)\left\lVert(1-t)\lVert x\rVert^{q-1}x+ t\lVert y\rVert^{q-1}y\right\rVert^{\frac{p}{q}-1}\alpha_q[x,y]
\end{equation*}
for almost all $t\in[0,1]$. Utilizing the norm inequality for the vector-valued integral, we get
\Small\begin{align*}
\alpha_p[x,y]&=\left\lVert\lVert y\rVert^{p-1}y-\lVert x\rVert^{p-1}x\right\rVert
=\lVert g(1)-g(0)\rVert
=\bigg\lVert\int_{0}^{1}g'(t)dt\bigg\rVert
\leq\int_{0}^{1}\lVert g'(t)\rVert dt\\
&\leq\bigg(\bigg\lvert\frac{p}{q}-1\bigg\rvert+1\bigg)\alpha_q[x,y]\int_{0}^{1}\left\lVert(1-t)\lVert x\rVert^{q-1}x+ t\lVert y\rVert^{q-1}y\right\rVert^{\frac{p}{q}-1}dt,
\end{align*}
and so, the proofs of $(\ref{nn})$ and $(\ref{mm})$ are complete.
\end{proof}
\begin{corollary}
Let $x,y\in \mathcal{X}$ be linearly independent and $p,q\in\mathbb{R}\smallsetminus{\lbrace 0\rbrace}$.
\begin{itemize}
\item [\rm{(i)}]~ If $0<\frac{p}{q}\leq1$, then
\begin{equation*}
\alpha_p[x,y]\geq\frac{p}{q}\alpha_q[x,y]\bigg(\int_{0}^{1}\left\lVert(1-t)\lVert x\rVert^{p-1}x+t\lVert y\rVert^{p-1}y\right\rVert^{\frac{q}{p}-1}dt\bigg)^{-1}.
\end{equation*}
\item [\rm{(ii)}]~ If $\frac{p}{q}\geq 1$ or $\frac{p}{q}<0$, then
\begin{equation*}
\alpha_p[x,y]\geq\frac{p}{2p-q}\alpha_q[x,y]\bigg(\int_{0}^{1}\left\lVert(1-t)\lVert x\rVert^{p-1}x+t\lVert y\rVert^{p-1}y\right\rVert^{\frac{q}{p}-1}dt\bigg)^{-1}.
\end{equation*}
\end{itemize}
\end{corollary}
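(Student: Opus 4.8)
The plan is to obtain both inequalities directly from the preceding theorem by interchanging the roles of $p$ and $q$, exploiting the symmetry principle announced at the start of this section. That theorem is valid for every real $p$ and every nonzero $q$; since here $p,q\in\mathbb{R}\smallsetminus\{0\}$, the swap $p\leftrightarrow q$ is legitimate. After swapping, inequalities $(\ref{nn})$ and $(\ref{mm})$ become upper bounds for $\alpha_q[x,y]$ in terms of $\alpha_p[x,y]$ and the quantity
\[
I:=\int_{0}^{1}\left\lVert(1-t)\lVert x\rVert^{p-1}x+t\lVert y\rVert^{p-1}y\right\rVert^{\frac{q}{p}-1}dt,
\]
which is exactly the integral appearing in the statement. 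The whole argument then amounts to rearranging these bounds to isolate $\alpha_p[x,y]$.

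For part $(i)$ I would first note that $0<\frac{p}{q}\leq1$ is equivalent to $\frac{q}{p}\geq1$ (with $p$ and $q$ of the same sign). Hence the swapped form of $(\ref{nn})$ applies and yields
\[
\alpha_q[x,y]\leq\frac{q}{p}\,\alpha_p[x,y]\,I.
\]
Because $\frac{p}{q}>0$ forces $\frac{q}{p}>0$, and $I$ is finite and strictly positive (justified below), dividing through by $\frac{q}{p}\,I$ preserves the inequality direction and gives the claimed lower bound $\alpha_p[x,y]\geq\frac{p}{q}\,\alpha_q[x,y]\,I^{-1}$.

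For part $(ii)$ the hypotheses $\frac{p}{q}\geq1$ and $\frac{p}{q}<0$ both imply $\frac{q}{p}\leq1$; when $\frac{q}{p}<1$ the swapped form of $(\ref{mm})$ applies and produces
\[
\alpha_q[x,y]\leq\frac{2p-q}{p}\,\alpha_p[x,y]\,I.
\]
The coefficient here equals $2-\frac{q}{p}$, which is strictly positive since $\frac{q}{p}<1$, so dividing through gives $\alpha_p[x,y]\geq\frac{p}{2p-q}\,\alpha_q[x,y]\,I^{-1}$. The lone boundary value $\frac{p}{q}=1$ (that is, $p=q$) is degenerate: there $\alpha_p=\alpha_q$, $I=1$, and both constants $\frac{p}{q}$ and $\frac{p}{2p-q}$ equal $1$, so the estimate is the trivial equality and is consistent with part $(i)$.

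The only point that genuinely requires care — and hence the mild obstacle — is verifying $0<I<\infty$, which is precisely where linear independence of $x,y$ is used, especially in part $(ii)$ where the exponent $\frac{q}{p}-1$ is negative. Since $\lVert x\rVert^{p-1}x$ and $\lVert y\rVert^{p-1}y$ are nonzero scalar multiples of $x$ and $y$, they are linearly independent, so the convex combination $(1-t)\lVert x\rVert^{p-1}x+t\lVert y\rVert^{p-1}y$ never vanishes on $[0,1]$. Its norm is therefore bounded below by a positive constant and above by continuity on the compact interval, keeping the integrand bounded away from $0$ and $\infty$; thus $0<I<\infty$ and $I^{-1}$ is a legitimate finite factor in each of the rearrangements above.
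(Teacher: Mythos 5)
Your proof is correct and takes exactly the route the paper intends: the corollary is stated there without proof precisely because it follows from inequalities $(\ref{nn})$ and $(\ref{mm})$ by the $p\leftrightarrow q$ interchange announced at the start of Section 3, which is what you carry out. Your added verifications --- that $0<I<\infty$ via linear independence (essential when $\frac{q}{p}-1<0$) and the degenerate boundary case $p=q$ in part (ii) --- are careful points the paper leaves implicit.
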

\begin{remark}
$(i)$ If $\frac{p}{q}\geq 1$, then, by the triangle inequality, we have
\begin{equation*}
\left\lVert(1-t)\lVert x\rVert^{q-1}x+ t\lVert y\rVert^{q-1}y\right\rVert^{\frac{p}{q}-1}\leq[(1-t)\lVert x\rVert^{q}+t\lVert y\rVert^{q}]^{{\frac{p}{q}-1}}
\end{equation*}
for any $t\in[0,1]$. Integrating both sides on $[0,1]$, we get
\begin{equation*}
\int_{0}^{1}\left\lVert(1-t)\lVert x\rVert^{q-1}x+ t\lVert y\rVert^{q-1}y\right\rVert^{\frac{p}{q}-1}dt\leq\frac{q}{p}\bigg(\frac{\lVert y\rVert^{p}-\lVert x\rVert^{p}}{\lVert y\rVert^{q}-\lVert x\rVert^{q}}\bigg)
\end{equation*}
if $\lVert x\rVert\neq\lVert y\rVert$, and by $(\ref{nn})$ we obtain the chain of inequalities
\begin{align}\label{vv}
\alpha_p[x,y]&\leq\frac{p}{q}~\alpha_q[x,y]\int_{0}^{1}\left\lVert(1-t)\lVert x\rVert^{q-1}x+t\lVert y\rVert^{q-1}y\right\rVert^{\frac{p}{q}-1}dt\nonumber\\
&\leq\frac{\lVert y\rVert^{p}-\lVert x\rVert^{p}}{\lVert y\rVert^{q}-\lVert x\rVert^{q}}\alpha_q[x,y],
\end{align}
which provides a generalization and refinement of Hile's inequality $(\ref{nbn})$.\par
$(ii)$ If $\frac{p}{q}\geq2$, then the function $f:[0,1]\rightarrow[0,\infty)$ given by $f(t)=[(1-t)\lVert x\rVert^{q}+t\lVert y\rVert^{q}]^{{\frac{p}{q}-1}}$ is convex. Employing the Hermite-Hadamard inequality for the convex function $f$ (see \cite{MSM} and references therein) we obtain
\begin{align*}
\frac{q}{p}\bigg(\frac{\lVert y\rVert^{p}-\lVert x\rVert^{p}}{\lVert y\rVert^{q}-\lVert x\rVert^{q}}\bigg)
&=\int_{0}^{1}[(1-t)\lVert x\rVert^{q}+t\lVert y\rVert^{q}]^{{\frac{p}{q}-1}}dt\\
&\leq\frac{\lVert x\rVert^{p-q}+\lVert y\rVert^{p-q}}{2}\leq\max\{\lVert x\rVert^{p-q},\lVert y\rVert^{p-q}\},
\end{align*}
which by $(\ref{nn})$, implies the following sequence of inequalities
\begin{align}\label{cch}
\alpha_p[x,y]&\leq\frac{p}{q}~\alpha_q[x,y]\int_{0}^{1}\left\lVert(1-t)\lVert x\rVert^{q-1}x+t\lVert y\rVert^{q-1}y\right\rVert^{\frac{p}{q}-1}dt\nonumber\\
&\leq\frac{p}{q}~\alpha_q[x,y]\int_{0}^{1}[(1-t)\lVert x\rVert^{q}+t\lVert y\rVert^{q}]^{{\frac{p}{q}-1}}dt\nonumber\\
&=\frac{\lVert y\rVert^{p}-\lVert x\rVert^{p}}{\lVert y\rVert^{q}-\lVert x\rVert^{q}}\alpha_q[x,y]\nonumber\\
&\leq\frac{p}{q}\alpha_q[x,y]\frac{\lVert x\rVert^{p-q}+\lVert y\rVert^{p-q}}{2}\leq\frac{p}{q}\alpha_q[x,y]\max\{\lVert x\rVert^{p-q},\lVert y\rVert^{p-q}\}
\end{align}
for $\lVert x\rVert\neq\lVert y\rVert$.
\par In particular, inequality $(\ref{cch})$ shows that in the case $\frac{p}{q}\geq2$, inequality
$(\ref{vv})$ is better than inequality $(\ref{i1})$.
\end{remark}
\begin{remark}
Let $\mathcal{X}$ be an inner product space. It is known \cite{D5} that for any $a,b\in \mathcal{X}$, $b\neq0$, it holds that
\begin{equation*}
\min_{t\in\mathbb{R}}\lVert a+t b\rVert=\frac{\sqrt{\lVert a\rVert^{2}\lVert b\rVert^{2}-\lvert\langle a,b\rangle\rvert^{2}}}{\lVert b\rVert}.
\end{equation*}
Hence, if $x$ and $y$ are linearly independent vectors of $\mathcal{X}$, then by taking $a=x$ and $b=y-x$, we obtain
\begin{equation*}
\lVert(1-t)x+ty\rVert=\lVert x+t(y-x)\rVert\geq\frac{\sqrt{\lVert x\rVert^{2}\lVert y\rVert^{2}-\langle x,y\rangle^{2}}}{\lVert x-y\rVert}\quad(t\in\mathbb{R}).
\end{equation*}
This implies that
\begin{equation*}
\int_{0}^{1}\lVert(1-t)x+ty\rVert^{-1}dt\leq\frac{\lVert x-y\rVert}{\sqrt{\lVert x\rVert^{2}\lVert y\rVert^{2}-\langle x,y\rangle^{2}}}.
\end{equation*}
Taking $p=0$ and $q=1$ in $(\ref{mm})$, we get
\begin{equation*}
\alpha[x,y]\leq 2\lVert x-y\rVert\int_{0}^{1}\lVert(1-t)x+ty\rVert^{-1}dt\leq\frac{2\lVert x-y\rVert^{2}}{\sqrt{\lVert x\rVert^{2}\lVert y\rVert^{2}-\langle x,y\rangle^{2}}}.
\end{equation*}
This implies an upper estimation for the error of the Cauchy-Schwarz inequality as follows
\begin{equation*}
\sqrt{\lVert x\rVert^{2}\lVert y\rVert^{2}-\langle x,y\rangle^{2}}\leq\frac{2\lVert x\rVert\lVert y\rVert\lVert x-y\rVert^{2}}{\left\lVert\lVert y\rVert x-\lVert x\rVert y\right\rVert}\qquad(\lVert y\rVert x\neq\lVert x\rVert y).
\end{equation*}
\end{remark}

\section{Characterizations of inner product spaces}

In this section, corresponding to Propositions $\ref{667}$ and $\ref{867}$, we give two characterizations of inner product spaces regarding to the $p$-angular and the skew $p$-angular distances.
\par The following characterization extends a result of Dehghan \cite{H} from $p=0$ to an arbitrary real number $p\neq1$.
\begin{theorem}\label{imp}
Let $p>1$$~($$p<1$ resp.$)$ is a real number. Then a normed space $\mathcal{X}$ is an inner product space, if and only if for any $x,y\in \mathcal{X}\smallsetminus{\lbrace 0\rbrace}$,
\begin{equation}\label{10}
\alpha_{p}[x,y]\geq\beta_{p}[x,y]\quad(\alpha_{p}[x,y]\leq\beta_{p}[x,y]~\text{resp.}).
\end{equation}
\end{theorem}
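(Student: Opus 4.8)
The forward implication is immediate from Proposition~\ref{667}: in an inner product space part (iii) gives $\alpha_p[x,y]\ge\beta_p[x,y]$ when $p>1$, and part (i) gives $\alpha_p[x,y]\le\beta_p[x,y]$ when $p<1$. So the substance is the converse, and the plan is to deduce condition (i) of Lorch's Theorem~A and then invoke that theorem. Since that condition is unchanged under a common scaling of the two vectors, it suffices to establish it for unit vectors. I therefore fix $u,v$ with $\|u\|=\|v\|=1$ and write a typical pair of non-zero vectors as $x=su$, $y=tv$ with $s,t>0$ (this exhausts all non-zero $x,y$). A direct computation gives $\alpha_p[x,y]=\|s^pu-t^pv\|$ and $\beta_p[x,y]=\|st^{p-1}u-s^{p-1}tv\|$, so after replacing $v$ by the unit vector $-v$ the hypothesis becomes
\[
\|s^pu+t^pv\|\ \ge\ \|st^{p-1}u+s^{p-1}tv\|\qquad(p>1),
\]
with the inequality reversed for $p<1$, valid for all $s,t>0$ and all unit $u,v$.

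The key device is a self-similar substitution. For $p\ne 0$ set $s=\lambda^{1/p}$ and $t=\lambda^{-1/p}$ with $\lambda>0$, so that $s^p=\lambda$, $t^p=\lambda^{-1}$ and $st=1$. The displayed inequality then collapses to
\[
\|\lambda u+\lambda^{-1}v\|\ \ge\ \|\mu u+\mu^{-1}v\|,\qquad \mu=\lambda^{(2-p)/p},
\]
(again reversed for $p<1$). Writing $\lambda=e^{\tau}$ and $\psi(\tau)=\|e^{\tau}u+e^{-\tau}v\|$, this reads $\psi(\tau)\ge\psi(c\tau)$ for $p>1$ and $\psi(\tau)\le\psi(c\tau)$ for $p<1$, where $c=(2-p)/p=2/p-1$. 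The point is that $c\ne\pm1$ for every admissible $p$, and in fact $|c|<1$ exactly when $p>1$ while $|c|>1$ when $p<1$ (indeed $c\in(-1,1)$ for $p>1$, $c>1$ for $0<p<1$, and $c<-1$ for $p<0$). Hence one can always iterate towards the origin: when $|c|<1$ the chain $\psi(\tau)\ge\psi(c\tau)\ge\psi(c^2\tau)\ge\cdots$ does the job, while when $|c|>1$ one rewrites the reversed inequality as $\psi(\tau)\ge\psi(\tau/c)$ and iterates $\tau\mapsto\tau/c$. In either case the argument tends to $0$, and continuity of $\psi$ (the norm composed with the continuous curve $\tau\mapsto e^{\tau}u+e^{-\tau}v$) yields $\psi(\tau)\ge\psi(0)=\|u+v\|$ for all $\tau\in\mathbb R$. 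Translating back, $\|u+v\|\le\|\lambda u+\lambda^{-1}v\|$ for all $\lambda>0$; the case $\lambda<0$ follows by factoring a sign out of $\lambda u+\lambda^{-1}v$. This is precisely Lorch's condition~(i), so Theorem~A gives that $\mathcal X$ is an inner product space.

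Two exponents must be handled separately because the substitution degenerates. When $p=0$ one argues directly: with $x=su$, $y=tv$ one has $\alpha_0[x,y]=\|u-v\|$ and $\beta_0[x,y]=\|(s/t)u-(t/s)v\|$, so the hypothesis $\alpha_0\le\beta_0$ is already $\|u+v\|\le\|\lambda u+\lambda^{-1}v\|$ with $\lambda=s/t$, i.e.\ Lorch's condition outright (this recovers Dehghan's case). When $p=2$ one has $c=0$, so the self-similar inequality reads $\psi(\tau)\ge\psi(0)$ at once. I expect the main obstacle to be recognizing and exploiting the self-similar structure: once the hypothesis is placed in the form $\psi(\tau)\gtrless\psi(c\tau)$, the iteration is the crux, and the remaining care lies in tracking the exponent $c=2/p-1$ across the ranges $0<p<1$, $p<0$ and $p>1$ (to fix the direction of iteration) and in justifying the limiting step by continuity of $\psi$.
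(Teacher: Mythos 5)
Your proposal is correct and takes essentially the same route as the paper: both deduce Lorch's condition (Theorem A) by applying the hypothesis to $\gamma^{1/p}x$ and $-\gamma^{-1/p}y$, obtaining the self-similar inequality $\Vert\gamma x+\gamma^{-1}y\Vert\geq\Vert\gamma^{(2-p)/p}x+\gamma^{-(2-p)/p}y\Vert$, and then iterating and passing to the limit using $\lvert(2-p)/p\rvert<1$. The only (harmless) variation is in the case $p<1$: the paper reduces it to $p>1$ via the duality $\alpha_p\leq\beta_p\iff\beta_{2-p}\leq\alpha_{2-p}$ coming from $(\ref{pp})$, whereas you iterate the self-similar inequality in the reverse direction ($\tau\mapsto\tau/c$, since $\lvert c\rvert>1$ there) and treat the degenerate exponent $p=0$ directly, which is equally valid.
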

\begin{proof}
If $\mathcal{X}$ is an inner product space, the conclusion follows from Proposition $\ref{667}$. Now, let $\mathcal{X}$ be a normed space satisfying the condition (\ref{10}). Since for arbitrary non-zero elements $x$ and $y$ of $\mathcal{X}$, the inequality $\alpha_{p}[x,y]\leq\beta_{p}[x,y]$ is equivalent to $\beta_{2-p}[x,y]\leq\alpha_{2-p}[x,y]$, it is sufficient to consider the case when $p>1$.
\par Let $x,y\in \mathcal{X}, \Vert x\Vert=\Vert y\Vert$ and $\gamma\neq 0$. From Theorem $\textbf{A}$ it is enough to prove that $\Vert\gamma x+\gamma^{-1}y\Vert\geq\Vert x+y\Vert$. Clearly, we can assume that $\lVert x\rVert=\lVert y\rVert=1$ and $\gamma >0$.
Applying inequality (\ref{10}) to $\gamma^{\frac{1}{p}} x$ and $-\gamma^{-\frac{1}{p}} y$ for $x$ and $y$ respectively, we obtain
\begin{equation}\label{234}
\Vert\gamma x+\gamma^{-1}y\Vert\geq\big\Vert\gamma^{\frac{2-p}{p}}x+\gamma^{-\frac{2-p}{p}}y\big\Vert.
\end{equation}
Now using the mathematical induction, we get
\begin{equation*}
\Vert\gamma x+\gamma^{-1}y\Vert\geq\bigg\Vert\gamma^{\big(\frac{2-p}{p}\big)^{n}}x+\gamma^{-\big(\frac{2-p}{p}\big)^{n}}y\bigg\Vert\quad(n=1,2,\ldots).
\end{equation*}
Since $p>1$, we have $\big\vert\frac{2-p}{p}\big\vert<1$, and so
\begin{equation*}\label{6}
\Vert\gamma x+\gamma^{-1}y\Vert\geq\lim_{n
\rightarrow\infty}\bigg\Vert\gamma^{\big(\frac{2-p}{p}\big)^{n}}x+\gamma^{-\big(\frac{2-p}{p}\big)^{n}}y\bigg\Vert=\Vert x+y\Vert.
\end{equation*}
This completes the proof.
\end{proof}
\begin{remark}
If $\mathcal{X}$ is not an inner product space, then for each $p\neq1$ there exist $x_i,y_i\in \mathcal{X}\smallsetminus{\lbrace 0\rbrace}~(i=1,2)$, such that $\alpha_p[x_1,y_1]<\beta_p[x_1,y_1]$ and $\alpha_p[x_2,y_2]>\beta_p[x_2,y_2]$. In fact if $p>1$, then by Theorem \ref{imp} there exist $x_1,y_1\in \mathcal{X}\smallsetminus{\lbrace 0\rbrace}$ such that $\alpha_p[x_1,y_1]<\beta_p[x_1,y_1]$. On the other hand, due to an arbitrary one dimensional subspace $M=\{\lambda e:
\lambda\in\mathbb{R}\}$ of $\mathcal{X}$ with $\lVert e\rVert=1$ is an inner product space via $\langle \lambda e, \mu e\rangle:=\lambda\mu$, for any $x_2,y_2\in M\smallsetminus{\lbrace 0\rbrace}$ with $\lVert x_2\rVert\neq\lVert y_2\rVert$, we have $\alpha_p[x_2,y_2]>\beta_p[x_2,y_2]$. A similar argument carry out when $p<1$.
\end{remark}
Now we give the second characterization of inner product spaces related to Proposition $\ref{867}$.
\begin{theorem}
Let $p\neq1$. Then a normed space $\mathcal{X}$ is an inner product space if and only if for any $x, y\in \mathcal{X}\smallsetminus{\lbrace 0\rbrace}$,
\small\begin{equation}\label{22}
\alpha_p[x, y]=\sqrt{(\lVert x\rVert^{p+1}-\lVert y\rVert^{p+1})(\lVert x\rVert^{p-1}-\lVert y\rVert^{p-1})+\lVert x\rVert^{p-1}\lVert y\rVert^{p-1}\lVert x-y\rVert^{2}}.
\end{equation}
\end{theorem}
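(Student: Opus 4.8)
The forward implication is immediate: if $\mathcal{X}$ is an inner product space, then identity \eqref{22} is exactly the content of Proposition \ref{867}. So the whole task is the converse, and the plan is to show that \eqref{22}, holding for all non-zero $x,y$, forces the symmetry condition (i) of Ficken's Theorem $\textbf{B}$, whence $\mathcal{X}$ is an inner product space. Throughout I would keep in mind that an \emph{equality} hypothesis should match the equality-type characterization of Ficken rather than an inequality one.

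First I would normalize. Writing $x=s\hat{x}$ and $y=t\hat{y}$ with $\hat{x},\hat{y}$ unit vectors and $s,t>0$, one has $\alpha_p[x,y]=\lVert s^{p}\hat{x}-t^{p}\hat{y}\rVert$, so squaring \eqref{22} gives
\[
\lVert s^{p}\hat{x}-t^{p}\hat{y}\rVert^{2}=(s^{p+1}-t^{p+1})(s^{p-1}-t^{p-1})+s^{p-1}t^{p-1}\lVert s\hat{x}-t\hat{y}\rVert^{2}.
\]
Using the degree-two homogeneity of the norm to divide out $t^{2p}$ and setting $r=s/t$, this collapses to a one-variable identity for the continuous function $n(r):=\lVert r\hat{x}-\hat{y}\rVert^{2}$, namely $n(r^{p})=(r^{p+1}-1)(r^{p-1}-1)+r^{p-1}n(r)$ for every $r>0$. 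Since $r^{2}+1$ is a particular solution and $r$ solves the associated homogeneous equation, I would substitute $\chi(r):=\bigl(n(r)-r^{2}-1\bigr)/r$; a direct check shows the identity is equivalent to the scaling invariance $\chi(r^{p})=\chi(r)$ for all $r>0$, where $\chi$ is continuous on $(0,\infty)$ and $\chi(1)=-2c(\hat x,\hat y)$ with $c(\hat x,\hat y):=1-\tfrac12\lVert\hat{x}-\hat{y}\rVert^{2}$.

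Next I would exploit this invariance. When $|p|\neq 1$, iterating the map $r\mapsto r^{p}$ drives everything to $1$: replacing $r$ by $r^{1/p^{n}}$ when $|p|>1$, or passing to $\chi(r)=\chi(r^{p^{n}})$ when $|p|<1$ (which includes $p=0$), and letting $n\to\infty$ gives $\chi(r)\to\chi(1)$ by continuity, so $\chi$ is constant. Constancy says exactly $\lVert r\hat{x}-\hat{y}\rVert^{2}=r^{2}+1-2c(\hat{x},\hat{y})\,r$ for all $r>0$ and all unit $\hat{x},\hat{y}$. Applying this with $\hat{y}$ replaced by $-\hat{y}$ and rescaling yields, for all $\alpha,\beta>0$, the formula $\lVert\alpha\hat{x}+\beta\hat{y}\rVert^{2}=\alpha^{2}+\beta^{2}-2c(\hat{x},-\hat{y})\,\alpha\beta$, whose right-hand side is symmetric in $\alpha,\beta$. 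Hence $\lVert\alpha\hat{x}+\beta\hat{y}\rVert=\lVert\beta\hat{x}+\alpha\hat{y}\rVert$, and rescaling to arbitrary equal-norm vectors gives precisely condition (i) of Theorem $\textbf{B}$, so $\mathcal{X}$ is an inner product space.

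The one value the iteration cannot reach is $p=-1$ (the only $p\neq1$ with $|p|=1$), and I expect this to be the main obstacle, since constancy of $\chi$ can no longer be forced. Here, however, the functional equation becomes $\chi(1/r)=\chi(r)$, which upon unwinding the definition of $\chi$ is exactly $\lVert r\hat{x}-\hat{y}\rVert=\lVert\hat{x}-r\hat{y}\rVert$ for all $r>0$; replacing $\hat{y}$ by $-\hat{y}$ and rescaling again delivers $\lVert\alpha\hat{x}+\beta\hat{y}\rVert=\lVert\beta\hat{x}+\alpha\hat{y}\rVert$, i.e. Ficken's condition, so the same conclusion holds. Thus for every $p\neq1$ the hypothesis reduces to Theorem $\textbf{B}$(i). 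The points I would take care to verify are the continuity of $\chi$ together with the attraction-to-$1$ claim (cleanest in the logarithmic variable $d=\ln r$, where the equation reads $\psi(pd)=\psi(d)$), and the bookkeeping showing that the coefficient in $\lVert\alpha\hat{x}+\beta\hat{y}\rVert^{2}$ is genuinely independent of how $\alpha,\beta$ are distributed between $\hat{x}$ and $\hat{y}$.
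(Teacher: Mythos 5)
Your proof is correct, and it takes a genuinely different route from the paper's. The paper substitutes $\lambda^{1/p}x$ and $-\lambda^{-1/p}y$ directly into the identity, observes that the cross term $(\lambda^{(2p+2)/p}-1)(\lambda^{(2p-2)/p}-1)$ has a definite sign, and discards it to obtain the one-sided inequality $\lVert\lambda x+\lambda^{-1}y\rVert\geq\lVert\lambda^{1/p}x+\lambda^{-1/p}y\rVert$; iterating and invoking Lorch's Theorem~A handles $p\neq 0,-1$, with $p=0$ done by a separate direct computation (still via Lorch) and $p=-1$ via Ficken's Theorem~B. You instead convert the identity into the exact functional equation $\chi(r^p)=\chi(r)$ for $\chi(r)=\bigl(\lVert r\hat x-\hat y\rVert^2-r^2-1\bigr)/r$ — your identification of $r^2+1$ as a particular solution and $r$ as the homogeneous solution checks out — and the same dynamical core (iterating $r\mapsto r^{p^{\pm 1}}$ so that exponents tend to $0$, plus continuity of the norm) now yields the \emph{stronger} conclusion that $\chi$ is constant whenever $\lvert p\rvert\neq 1$, i.e., the exact quadratic formula $\lVert\alpha\hat x+\beta\hat y\rVert^2=\alpha^2+\beta^2-2c(\hat x,-\hat y)\,\alpha\beta$, whose $\alpha\leftrightarrow\beta$ symmetry is precisely Ficken's condition~(i). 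This buys you a uniform treatment: $p=0$ is absorbed as the one-step case $\chi(r)=\chi(1)$ rather than needing its own computation, $p=-1$ falls out of the same equation as $\chi(1/r)=\chi(r)$ (equivalently $\lVert r\hat x-\hat y\rVert=\lVert\hat x-r\hat y\rVert$, matching what the paper's identity gives directly at $p=-1$), and only Ficken is needed, never Lorch. The trade-off is that you retain an exact identity throughout, where the paper gets away with the cheaper inequality; your intermediate formula is in fact close to recovering the inner-product structure on each pair of rays by hand. The verification points you flag at the end (continuity of $\chi$, attraction to $1$ of $r^{p^{\pm n}}$, and the independence of the coefficient from the distribution of $\alpha,\beta$) are all routine and hold as you expect, so I see no gap.
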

\begin{proof}
If $\mathcal{X}$ is an inner product space, then identity $(\ref{22})$ follows from Proposition $\ref{867}$. Now, let $\mathcal{X}$ be a normed space satisfying condition $(\ref{22})$. We prove that $\mathcal{X}$ is an inner product space by considering the following three cases for $p$.\par
$\textbf{Case 1.}$ Assume that $p\neq 0, -1$. Let $x,y\in \mathcal{X}, \Vert x\Vert=\Vert y\Vert$ and $\lambda\neq 0$. From Theorem $\textbf{A}$ it is enough to prove that $\Vert x+y\Vert\leq\Vert\lambda x+\lambda^{-1}y\Vert$. We may
assume that $\lVert x\rVert=\lVert y\rVert=1$ and $\lambda>0$.
Applying identity $(\ref{22})$ to $\lambda^{\frac{1}{p}}x$ and $-\lambda^{-\frac{1}{p}}y$ for $x$ and $y$ respectively, we obtain
\begin{align*}
\lVert\lambda x+\lambda^{-1}y\rVert&=\sqrt{(\lambda^{\frac{p+1}{p}}-\lambda^{-\frac{p+1}{p}})(\lambda^{\frac{p-1}{p}}-\lambda^{\frac{1-p}{p}})+\lVert\lambda^{\frac{1}{p}}x+\lambda^{-\frac{1}{p}}y\rVert^{2}}\\
&=\sqrt{\frac{\lambda^{\frac{2p+2}{p}}-1}{\lambda^{\frac{p+1}{p}}}\cdot\frac{\lambda^{\frac{2p-2}{p}}-1}{\lambda^{\frac{p-1}{p}}}+\lVert\lambda^{\frac{1}{p}}x+\lambda^{-\frac{1}{p}}y\rVert^{2}}.
\end{align*}
If $p>1$, then $2p+2>2p-2>0$, and if $p<-1$, then $2p-2<2p+2<0$. For $\lvert p\rvert>1$, we therefore have $(\lambda^{\frac{2p+2}{p}}-1)(\lambda^{\frac{2p-2}{p}}-1)\geq0$. Hence $\lVert\lambda x+\lambda^{-1}y\rVert\geq\lVert\lambda^{\frac{1}{p}}x+\lambda^{-\frac{1}{p}}y\rVert$. It yields that
\begin{equation}\label{hh}
\lVert\lambda x+\lambda^{-1}y\rVert\geq\lVert\lambda^{\frac{1}{p^{n}}}x+\lambda^{-\frac{1}{p^{n}}}y\rVert\qquad(n=1,2,\ldots).
\end{equation}
Thus,
\begin{equation*}
\lVert\lambda x+\lambda^{-1}y\rVert\geq\lim_{n\rightarrow\infty}\lVert\lambda^{\frac{1}{p^{n}}}x+\lambda^{-\frac{1}{p^{n}}}y\rVert=\lVert x+y\rVert.
\end{equation*}
Now if $\lvert p\rvert<1$, then $\lvert\frac{1}{p}\rvert>1$, and so by substituting $p$ by $\frac{1}{p}$ in $(\ref{hh})$ we get
\begin{equation*}
\lVert\lambda x+\lambda^{-1}y\rVert\geq\lVert\lambda^{p^{n}}x+\lambda^{-p^{n}}y\rVert\qquad(n=1,2,\ldots).
\end{equation*}
Hence,
\begin{equation*}
\lVert\lambda x+\lambda^{-1}y\rVert\geq\lim_{n\rightarrow\infty}\lVert\lambda^{p^{n}}x+\lambda^{-p^{n}}y\rVert=\lVert x+y\rVert,
\end{equation*}
and so, $\mathcal{X}$ is an inner product space.\par
$\textbf{Case 2.}$ Suppose that $p=0$.
Let $x,y\in \mathcal{X}$, $\lVert x\rVert=\lVert y\rVert=1$ and $\lambda>0$.
Replacing $x$ and $y$ by $\lambda x$ and $-\lambda^{-1} y$ respectively, in identity $(\ref{22})$, we get
\begin{equation*}
\lVert x+y\rVert^{2}=\lVert\lambda x+\lambda^{-1}y\rVert^{2}-(\lambda-\frac{1}{\lambda})^{2}
\leq\lVert\lambda x+\lambda^{-1}y\rVert^{2}.
\end{equation*}
It follows from Theorem $\textbf{A}$ that $\mathcal{X}$ is an inner product space.\par
$\textbf{Case 3.}$
Let $p=-1$. Assume $x, y\in \mathcal{X}$ such that $\lVert x\rVert=\lVert y\rVert$ and $\lambda>0$. Applying identity $(\ref{22})$ to $\lambda x$ and $-\lambda^{-1}y$ instead of $x$ and $y$ respectively, we obtain $\lVert\lambda x+\lambda^{-1}y\rVert=\lVert\lambda^{-1}x+\lambda y\rVert$. Therefore, Theorem $\textbf{B}$ ensures that $\mathcal{X}$ is an inner product space.
\end{proof}
\begin{remark}\label{fg}
It seems that the characterization of inner product spaces in Theorem \ref{imp} can be extended in a more general case. For example, the following inequality
\small\begin{equation}\label{ls}
\bigg\lVert\frac{x}{1+\lVert x\rVert}-\frac{y}{1+\lVert y\rVert}\bigg\rVert\leq\bigg\lVert\frac{x}{1+\lVert y\rVert}-\frac{y}{1+\lVert x\rVert}\bigg\rVert\quad(x,y\in \mathcal{X}),
\end{equation}
is also a characterization of inner product spaces. In fact, $(\ref{ls})$ holds in any inner product spaces and conversely, if $(\ref{ls})$ holds in a normed linear space $\mathcal{X}$, then substituting $x$ and $y$ by $nx$ and $ny~(n=1,2,\ldots)$ respectively, we obtain
\begin{equation*}
\small\bigg\lVert\frac{x}{\frac{1}{n}+\lVert x\rVert}-\frac{y}{\frac{1}{n}+\lVert y\rVert}\bigg\rVert\leq\bigg\lVert\frac{x}{\frac{1}{n}+\lVert y\rVert}-\frac{y}{\frac{1}{n}+\lVert x\rVert}\bigg\rVert.
\end{equation*}
Now letting $n\to \infty$, we get $\alpha[x,y]\leq\beta[x,y]~(x,y\in \mathcal{X}\smallsetminus{\lbrace 0\rbrace})$, and so $\mathcal{X}$ is an inner product space.
\end{remark}
\bigskip
\subsection*{Acknowledgement}
The authors would like to sincerely thank the anonymous referee for carefully reading the article. The corresponding author, M. S. Moslehian, would like to thank The Tusi Mathematical Research Group (TMRG).
\bigskip
\bibliographystyle{amsplain}

\begin{thebibliography}{99}
\bibitem{Al} A. M. Al-Rashed, \textit{Norm inequalities and characterizations of inner product spaces}, J. Math. Anal. Appl. \textbf{176} (1993) 587--593.

\bibitem{A} D. Amir, \textit{Characterizations of inner product spaces}, Operator Theory: Advances
and Applications, 20, Birkh\"auser Verlag, Basel, 1986..

\bibitem{C} J. A. Clarkson, \textit{Uniformly convex spaces}, Trans. Amer. Math. Soc. \textbf{40} (1936), 396--414.

\bibitem{DFM} F. Dadipour, M. S. Moslehian, \textit{A characterization of inner product spaces related to the $p$-angular distance}, J. Math. Anal. Appl. \textbf{371} (2010), no. 2, 677--681.

\bibitem{DSS} F. Dadipour, F. Sadeghi, A. Salemi, \textit{Characterizations of inner product spaces involving homogeneity of isosceles orthogonality}. Arch. Math. (Basel) \textbf{104} (2015), no. 5, 431--439.

\bibitem{H} H. Dehghan, \textit{A characterization of inner product spaces related to the skew-angular distance}, Math. Notes \textbf{93} (2013), no. 4, 556--560.

\bibitem{D5} S. S. Dragomir, \textit{ A survey of some recent inequalities for the norm and numerical radius of operators in Hilbert spaces}, Banach J. Math. Anal. \textbf{1} (2007), no. 2, 154--175.

\bibitem{D2} S. S. Dragomir, \textit{New inequalities for the p-Angular distance in normed spaces with applications}, Ukrainian Math. J. \textbf{67} (2015), no. 1, 19--32.

\bibitem{D1} S. S. Dragomir, \textit{Upper and lower bounds for the $p$-angular distance in normed spaces with applications}, J. Math. Inequal. \textbf{8} (2014), no. 4, 947--961.

\bibitem{D9} C. F. Dunkl, K.S. Williams, Mathematical notes: A simple norm inequality, Amer. Math. Monthly \textbf{71} (1964), no. 1, 53--54.

\bibitem{F} F. A. Ficken, \textit{Note on the existence of scalar products in normed linear spaces}, Ann. of Math. (2) \textbf{45} (1944), 362--366.

\bibitem{VI} V. I. Gurari\u{l}, \textit{Strengthening the Dunkl-Williams inequality on the norm of elements of Banach spaces}, Dopov\=\i d\=\i\ Akad. Nauk Ukra\"\i n. RSR (1966), 35--38 (Ukrainian).

\bibitem{kh} E. Hewitt, K. Stromberg, \textit{Real and abstract analysis}, Springer, New York, 1965.

\bibitem{khu} G. N. Hile, \textit{Entire solutions of linear elliptic equations with Laplacian principal part}. Pacific J. Math. \textbf{62} (1976), no. 1, 127--140.

\bibitem{lorch} E. R. Lorch, \textit{On certain implications which characterize Hilbert space}, Ann. of Math. \textbf{49} (1948), no. 3, 523--532.

\bibitem{M} L. Maligranda, \textit{Simple norm inequalities}, Amer. Math. Monthly. \textbf{113} (2006), no. 3, 256--260.

\bibitem{MSM} M. S. Moslehian, \textit{Matrix Hermite--Hadamard type inequalities}, Houston J. Math. \textbf{39} (2013), no. 1, 177--189.
\end{thebibliography}

\end{document}